\def\be{\begin{equation}}
\def\en{\end{equation}}
\def\bee{\begin{eqnarray*}}
\def\ene{\end{eqnarray*}}
\def\R{{\bf R}}
\def\R{{\mathbb R}}
\def\Var{{\rm Var}}
\def\Ent{{\rm Ent}}
\def\ep{\varepsilon}
\newtheorem{thm}{Theorem}
\numberwithin{thm}{section}
\newtheorem{cor}[thm]{Corollary}
\newtheorem{lem}[thm]{Lemma}
\newtheorem{rem}[thm]{Remark} 
\newtheorem{prop}[thm]{Proposition}
\theoremstyle{definition}
\newtheorem{ex}{Example}
\title{Concentration Properties of Restricted Measures with Applications to Non-Lipschitz Functions}
\author{S. G. Bobkov\thanks{School of Mathematics, University of Minnesota, Minneapolis, MN 55455}, P. Nayar\thanks{Institute of Mathematics \& Applications, Minneapolis, MN 55455; E-mail address: nayar@mimuw.edu.pl (corresponding author); research supported in part by NCN grant DEC-2012/05/B/ST1/00412}, and P. Tetali\thanks{School of Mathematics and School of Computer Science, Georgia Institute of Technology, Atlanta, GA 30332; research supported in part by NSF DMS-1407657.}}
\begin{document}

\maketitle

\noindent {\bf 2010 Mathematics Subject Classification.}  Primary 60Gxx; 

\vspace{0.3cm}

\noindent {\bf Keywords and phrases.} Subgaussian constant, spread constant, restricted measures, \\ concentration of measure. 

\abstract{We show that for any metric probability space $(M,d,\mu)$ with a subgaussian constant $\sigma^2(\mu)$ and any set $A \subset M$ we have
$
	\sigma^2(\mu_A) \leq  c \log\left(e/\mu(A)\right)\,\sigma^2(\mu)
$, where $\mu_A$ is a restriction of $\mu$ to the set $A$ and $c$ is a universal constant. As a consequence we deduce concentration inequalities  for non-Lipschitz functions.
    }

\section{Introduction}\label{intro}
\setcounter{equation}{0}

It is known that many high-dimensional probability distributions $\mu$
on the Euclidean space $\R^n$ (and other metric spaces, including graphs) possess 
strong concentration properties. In a functional language, this may informally be
stated as the assertion that any sufficiently smooth function $f$ on $\R^n$, e.g.,
having a bounded Lipschitz semi-norm, is almost a constant on almost all space. 
There are several ways to quantify such a property. One natural approach proposed 
by N. Alon, R. Boppana and J. Spencer \cite{A-B-S} associates with a given metric 
probability space $(M,d,\mu)$ its {\em spread constant},
\[
s^2(\mu) = \sup\, \Var_\mu(f) = \sup \int (f-m)^2\,d\mu,
\]
where $m = \int f\,d\mu$, and the sup is taken over all functions $f$ on $M$ with 
$\|f\|_{\rm Lip} \leq 1$. More information is 
contained in the so-called subgaussian constant $\sigma^2 = \sigma^2(\mu)$ which is 
defined as the infimum over all $\sigma^2$ such that
\be \label{1.1}
\int e^{tf}\,d\mu \leq e^{\sigma^2 t^2/2}, \quad {\rm for \ all} \ \ t \in \R,
\en
for any $f$ on $M$ with $m=0$ and $\|f\|_{\rm Lip} \leq 1$ (cf. \cite{B-G-H}).
This quantity may also be introduced via the transport-entropy inequality relating 
the classical Kantorovich distance and the relative entropy from an arbitrary 
probability measure on $M$ to the measure $\mu$ (cf. \cite{B-G}).

While in general $s^2 \leq \sigma^2$, the latter characteristic allows one 
to control subgaussian tails under the probability 
measure $\mu$ uniformly in the entire class of Lipschitz functions on $M$.
More generally, when $\|f\|_{\rm Lip} \leq L$, \eqref{1.1} yields
\be \label{1.2}
\mu\{|f - m| \geq t\} \leq 2 e^{-t^2/(\sigma^2 L^2)}, \qquad t > 0.
\en

Classical and well-known examples include the standard Gaussian measure
on $M = \R^n$ in which case $s^2 = \sigma^2 = 1$, and the normalized 
Lebesgue measure on the unit sphere $M = S^{n-1}$ with 
$s^2 = \sigma^2 = \frac{1}{n-1}$. The last example was a starting point in the
study of the concentration of measure phenomena, a fruitful direction initiated 
in the early 1970s by V. D. Milman.

Other examples come often after verification that $\mu$ satisfies certain
Sobolev-type inequalities such as Poincar\'e-type inequalities
\[
\lambda_1 \Var_\mu(u) \leq \int |\nabla u|^2\,d\mu, 
\]
and logarithmic Sobolev inequalities
\[
\rho\, \Ent_\mu(u^2) = 
\rho \bigg[\int u^2 \log u^2\,d\mu - \int u^2\,d\mu\, \log \int u^2\,d\mu\bigg]
\leq 2 \int |\nabla u|^2\,d\mu,
\]
where $u$ may be any locally Lipschitz function on $M$, and the constants 
$\lambda_1 > 0$ and $\rho > 0$ do not depend on $u$. Here the modulus of the gradient 
may be understood in the generalized sense as the function
\[
|\nabla u(x)| = \limsup_{y \rightarrow x} \frac{|u(x) - u(y)|}{d(x,y)}, \qquad x \in M
\]
(this is the so-called ``continuous setting"), while in the discrete spaces, e.g., graphs, 
we deal with other naturally defined gradients.
In both cases, one has respectively the well-known upper bounds
\be \label{1.3}
s^2(\mu) \leq \frac{1}{\lambda_1}, \qquad \sigma^2(\mu) \leq \frac{1}{\rho}.
\en
For example, $\lambda_1 = \rho = n-1$ on the unit sphere (best possible values, \cite{M-W}), 
which can be used to make a corresponding statement about the spread and Gaussian
constants.

One of the purposes of this note is to give new examples by involving the family 
of the normalized restricted measures
\[
\mu_A(B) = \frac{\mu(A \cap B)}{\mu(A)}, \qquad B \subset M \ ({\rm Borel}),
\]
where a set $A \subset M$ is fixed and has a positive measure. As an example, 
returning to the standard Gaussian measure $\mu$ on $\R^n$, it is known that 
$\sigma^2(\mu_A) \leq 1$ for any convex body $A \subset \R^n$. This remarkable property, 
discovered by D. Bakry and M. Ledoux \cite{B-L} in a sharper form of a Gaussian-type 
isoperimetric inequality, has nowadays several proofs and generalizations, cf. \cite{B1,B2}.
Of course, in general, the set $A$ may have a rather disordered structure, for example, 
to be disconnected. And then there is no hope for validity of a Poincar\'e-type inequality
for the measure $\mu_A$.
Nevertheless, it turns out that the concentration property of $\mu_A$ is inherited
from $\mu$, unless the measure of $A$ is too small. In particular, 
we have the following observation about abstract metric probability spaces.

\begin{thm}\label{t.1.1}
For any measurable set $A \subset M$ with $\mu(A) > 0$,
the subgaussian constant $\sigma^2(\mu_A)$ of the normalized restricted measure satisfies
\be \label{1.4}
\sigma^2(\mu_A) \, \leq \, c\,\log\Big(\frac{e}{\mu(A)}\Big)\,\sigma^2(\mu),
\en
where $c$ is an absolute constant.
\end{thm}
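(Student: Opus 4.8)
The plan is to fix a $1$-Lipschitz function $f$ on $M$ with $\int f\,d\mu_A=0$, to establish a subgaussian tail bound for it under $\mu_A$ with parameter of order $\log(e/\mu(A))\,\sigma^2(\mu)$, and then to pass back from tails to the Laplace transform. Write $\sigma^2=\sigma^2(\mu)$, which we may assume finite since otherwise \eqref{1.4} is vacuous, and set $\beta=\log(e/\mu(A))\ge 1$, so that $1/\mu(A)=e^{\beta-1}$. Let $m=\int f\,d\mu$ and $g=f-m$; then $g$ is $1$-Lipschitz with $\int g\,d\mu=0$, so \eqref{1.1} applies to $g$ and, via the usual Chernoff estimate, yields the two-sided tail bound $\mu\{|g|\ge r\}\le 2e^{-r^2/(2\sigma^2)}$ for all $r\ge 0$.

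The first step is a mean-shift estimate. Since $\int f\,d\mu_A=0$ we have $m=-\frac{1}{\mu(A)}\int_A g\,d\mu$, hence $|m|\le\frac{1}{\mu(A)}\int_A|g|\,d\mu$. Truncating at the level $L=\sigma\sqrt{2\beta}$ and invoking the layer-cake formula together with the tail bound for $g$, one finds $\int_A|g|\,d\mu\le L\,\mu(A)+C\,\sigma\sqrt{\beta}\,\mu(A)$; the choice of $L$ is made precisely so that the resulting factor $e^{-L^2/(2\sigma^2)}=\mu(A)/e$ cancels the $1/\mu(A)$ out front. Thus $|m|\le C_0\,\sigma\sqrt{\beta}$ with $C_0$ an absolute constant. (This estimate, quantifying how far the $\mu_A$-mean of a Lipschitz function can drift from its $\mu$-mean, may be of independent interest.)

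Next I transfer the tails. For $s\ge|m|$ the inclusion $\{|f|\ge s\}\subset\{|g|\ge s-|m|\}$ gives $\mu_A\{|f|\ge s\}\le\frac{1}{\mu(A)}\,\mu\{|g|\ge s-|m|\}\le 2e^{\beta-1}\,e^{-(s-|m|)^2/(2\sigma^2)}$. I then claim $\mu_A\{|f|\ge s\}\le 2e^{-s^2/(2v)}$ for all $s\ge 0$, where $v=c\,\sigma^2\beta$ and $c$ is a suitable absolute constant. For $s\le\sqrt{2c\log 2}\,\sigma\sqrt{\beta}$ the right-hand side is at least $1$ and there is nothing to prove; choosing $c$ large enough that every $s$ beyond this range already satisfies $s\ge 2|m|$, for such $s$ one has $(s-|m|)^2\ge s^2/4$, and the claim reduces to the elementary inequality $\frac{s^2}{8\sigma^2}-\frac{s^2}{2v}\ge\beta-1$, which holds once $c$ is large enough. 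The mechanism is that on the range where the crude bound is invoked one has $s^2/\sigma^2\gtrsim\beta$, so inflating the subgaussian denominator by the factor $\beta$ opens up exactly the room needed to swallow the additive $\beta-1$ contributed by the prefactor $1/\mu(A)=e^{\beta-1}$.

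Finally, $f$ is centered under $\mu_A$ and now carries a subgaussian tail bound with parameter $v$, so the standard equivalence between subgaussian tails and a subgaussian Laplace transform for centered random variables yields $\int e^{tf}\,d\mu_A\le e^{C_1 v t^2}$ for all $t\in\R$, with $C_1$ absolute; hence $\sigma^2(\mu_A)\le 2C_1 v=c'\log(e/\mu(A))\,\sigma^2(\mu)$, as desired. I expect the delicate point to be the third step: one must calibrate $c$ so that the trivial range, in which the target tail bound holds for free, meets the range in which the crude bound (still burdened by the enormous factor $1/\mu(A)$) becomes usable, leaving no gap between them. It is this matching requirement, fed by the mean-shift bound $|m|\lesssim\sigma\sqrt{\beta}$ of the first step, that forces the appearance of the logarithmic factor and fixes how large $c$ has to be. A direct attempt to bound $\int e^{tf}\,d\mu_A$ through a truncation of the Laplace transform runs into the same obstruction for small $t$, which is why routing the argument through tail bounds (where the centering can be exploited at the very end) seems cleanest.
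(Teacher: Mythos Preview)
Your proof is correct and takes a genuinely different route from the paper's. The paper works through Orlicz norms: it first establishes (Proposition~\ref{p.3.1}) the inequality $\|f\|_{L^{\psi_2}(\mu_A)}\le 4e\,\log^{1/2}(e/\mu(A))\,\|f\|_{L^{\psi_2}(\mu)}$ for \emph{arbitrary} measurable $f$, by passing to $L^p$-norms via the equivalence $\|f\|_{\psi_2}\asymp\sup_{p\ge 1}\|f\|_p/\sqrt p$ (Lemma~\ref{l.2.1}), using the trivial bound $\|f\|_{L^q(\mu_A)}^q\le\mu(A)^{-1}\|f\|_{L^q(\mu)}^q$, and optimizing the choice of $p$. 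Centering is then handled by symmetrization on $M\times M$ with $f(x)-f(y)$ (Corollary~\ref{c.3.2}), and the subgaussian constant is recovered through the two-sided comparison $\sigma_f^2\asymp\|f\|_{\psi_2}^2$ for mean-zero $f$ (Lemmas~\ref{l.4.1}--\ref{l.4.2}). Your argument instead stays with tail probabilities throughout: the mean-shift bound $|m|\lesssim\sigma\sqrt\beta$ replaces the symmetrization step, and the logarithmic factor emerges from matching the ``trivial'' range $s\lesssim\sigma\sqrt\beta$ to the range where the crude tail transfer (which costs a multiplicative $1/\mu(A)=e^{\beta-1}$) becomes effective. The paper's route buys a general $\psi_2$-norm inequality for all functions, not only Lipschitz ones, together with a parallel $\psi_1$ version (Proposition~\ref{p.3.3}) that feeds directly into Theorem~\ref{t.1.3}; your route is more elementary and self-contained, and makes the role of the mean shift explicit. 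One small omission: since $\sigma^2(\mu_A)$ is defined over $1$-Lipschitz functions on $(A,d)$, you should note that it suffices to test $1$-Lipschitz $f$ defined on all of $M$, because any $1$-Lipschitz function on $A$ extends to one on $M$ via the McShane--Kirszbraun formula $\tilde f(x)=\inf_{a\in A}[f(a)+d(a,x)]$; the paper invokes this explicitly.
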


One may further generalize this assertion by defining the subgaussian constant
$\sigma^2_{\cal F}(\mu)$ within a given fixed subclass $\cal F$ of functions 
on $M$, by using the same bound \eqref{1.1} on the Laplace transform. This is motivated by a 
possible different level of concentration for different classes; indeed, 
in case of $M = \R^n$, the concentration property may considerably be strengthened for the 
class $\cal F$ of all convex Lipschitz functions. In particular, one
result of M. Talagrand \cite{T1,T2} provides a dimension-free bound 
$\sigma^2_{\cal F}(\mu) \leq C$ for an arbitrary product probability measure $\mu$ 
on the $n$-dimensional cube $[-1,1]^n$. Hence, a more general 
version of Theorem \ref{t.1.1} yields the bound
\[
\sigma_{\cal F}^2(\mu_A) \, \leq \, c\,\log\Big(\frac{e}{\mu(A)}\Big)
\]
with some absolute constant $c$, which holds for any Borel subset $A$ of $[-1,1]^n$
(cf. Section \ref{deviations} below).

According to the very definition, the quantities $\sigma^2(\mu)$ and $\sigma^2(\mu_A)$ 
might seem to be responsible for deviations of only Lipschitz functions $f$ on $M$ and 
$A$, respectively. However, the inequality \eqref{1.4} may also be used to control deviations 
of non-Lipschitz $f$ -- on large parts of the space and under certain regularity hypotheses.
Assume, for example, $\int |\nabla f|\,d\mu \leq 1$ (which is kind of a normalization 
condition) and consider
\be \label{1.5}
A = \{x \in M: |\nabla f(x)| \leq L\}.
\en
If $L \geq 2$, this set has the measure $\mu(A) \geq 1 - \frac{1}{L} \geq \frac{1}{2}$, 
and hence, $\sigma^2(\mu_A) \leq c\sigma^2(\mu)$ with some absolute constant $c$.
If we assume that $f$ has a Lipschitz semi-norm $\leq L$ on $A$, then, according to \eqref{1.2},
\be \label{1.6}
\mu_A\{x \in A: |f - m| \geq t\} \leq 2 e^{-t^2/(c\sigma^2(\mu) L^2)}, \qquad t > 0,
\en
where $m$ is the mean of $f$ with respect to $\mu_A$.
It is in this sense one may say that $f$ is almost a constant on the set $A$. 

This also yields a corresponding deviation bound on the whole space,
\[
\mu\{x \in M: |f - m| \geq t\} \leq 2 e^{-t^2/c\sigma^2(\mu) L^2} + \frac{1}{L}.
\]
Stronger integrability conditions posed on $|\nabla f|$ can considerably sharpen 
the conclusion. By a similar argument, Theorem \ref{t.1.1} yields, for example, the following 
exponential bound, known in the presence of a logarithmic Sobolev inequality 
for the space $(M,d,\mu)$, and with $\sigma^2$ replaced by $1/\rho$ (cf. \cite{B-G}).

\begin{cor}\label{c.1.2}
Let $f$ be a locally Lipschitz function on $M$ with Lipschitz 
semi-norms $\leq L$ on the sets \eqref{1.5}. If $\int e^{|\nabla f|^2}\,d\mu \leq 2$, 
then $f$ is $\mu$-integrable, and moreover,
\[
\mu\{x \in M: |f - m| \geq t\} \leq 2 e^{-t/c\sigma(\mu)}, \qquad t > 0,
\]
where $m$ is the $\mu$-mean of $f$ and $c$ is an absolute constant.
\end{cor}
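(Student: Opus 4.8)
The plan is to apply Theorem~\ref{t.1.1} to the super-level sets $A_L=\{x\in M:|\nabla f(x)|\le L\}$ appearing in \eqref{1.5} over a suitable range of scales $L$, and then to patch the resulting local subgaussian estimates into a single global exponential bound. We may assume $0<\sigma(\mu)<\infty$, the remaining cases being trivial. First, from Markov's inequality and the hypothesis $\int e^{|\nabla f|^2}\,d\mu\le 2$ one gets the super-Gaussian tail $\mu(M\setminus A_L)\le 2e^{-L^2}$ for every $L>0$. I would then fix an absolute constant $L_0$ large enough that $2e^{-L_0^2}\le\frac1{32}$; for $L\ge L_0$ this forces $\mu(A_L)\ge\frac12$, hence $\log(e/\mu(A_L))\le\log(2e)\le 2$, and Theorem~\ref{t.1.1} gives $\sigma^2(\mu_{A_L})\le\kappa^2\sigma^2(\mu)$ with $\kappa^2=2c$ absolute. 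Since $f$ has Lipschitz semi-norm $\le L$ on $A_L$, applying \eqref{1.2} to the metric probability space $(A_L,d,\mu_{A_L})$ produces, with $m_L:=\int f\,d\mu_{A_L}$, the family of local estimates
\be\label{c.loc}
\mu\{x\in A_L:\,|f(x)-m_L|\ge s\}\ \le\ 2\,e^{-s^2/(\kappa^2\sigma^2(\mu)\,L^2)},\qquad s>0 .
\en

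The next, and hardest, step is to show that $f\in L^1(\mu)$ and that the local means $m_L$ stay within $O(\sigma(\mu))$ of the global mean $m$. The starting identity for $L_0\le L'<L$ is
\[
m_L-m_{L'}\ =\ \frac1{\mu(A_L)}\int_{A_L\setminus A_{L'}}(f-m_{L'})\,d\mu ,
\]
whose right-hand side I would bound by the Cauchy--Schwarz inequality, using $\int_{A_L}(f-m_L)^2\,d\mu=\mu(A_L)\,\Var_{\mu_{A_L}}(f)\le\kappa^2\sigma^2(\mu)\,L^2$ (this uses $s^2\le\sigma^2$ and the Lipschitz bound on $A_L$) together with $\mu(A_L\setminus A_{L'})\le 2e^{-(L')^2}$; after absorbing a quadratic term — legitimate because $2e^{-(L')^2}$ is small — this should give $|m_L-m_{L'}|\le C\,\sigma(\mu)\,L\,e^{-(L')^2/2}$. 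Along the dyadic scales $L_k=2^kL_0$ the increments $|m_{L_{k+1}}-m_{L_k}|$ are then summable, so $(m_{L_k})_k$ converges and $|m_{L_k}-m_{L_0}|\le C_1\,\sigma(\mu)$ for all $k$; a companion Cauchy--Schwarz estimate over the dyadic annuli $A_{L_{k+1}}\setminus A_{L_k}$, together with this control of the means, yields $\int_M|f-m_{L_0}|\,d\mu<\infty$. Thus $f$ is $\mu$-integrable, and passing to the limit in $\int_{A_{L_k}}f\,d\mu=\mu(A_{L_k})\,m_{L_k}$ (dominated convergence, $\mu(A_{L_k})\to 1$) gives $m=\int f\,d\mu=\lim_k m_{L_k}$ with $|m-m_{L_0}|\le C_1\,\sigma(\mu)$. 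Finally, taking $L'=L_0$ above also yields the cruder bound $|m_L-m_{L_0}|\le C_2\,\sigma(\mu)\,L$ for all $L\ge L_0$, so that $|m_L-m|\le C_3\,\sigma(\mu)(1+L)$ throughout.

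The conclusion then follows by optimizing the scale. If $t\le C_4\,\sigma(\mu)$ for a suitable absolute constant $C_4$, the asserted inequality is trivial (its right-hand side is at least $1$). For $t>C_4\,\sigma(\mu)$ I would take $L:=\sqrt{t/(2\kappa\,\sigma(\mu))}$; a large enough $C_4$ makes $L\ge L_0$ and makes $|m_L-m|\le C_3\,\sigma(\mu)(1+L)\le t/2$. Splitting
\[
\mu\{|f-m|\ge t\}\ \le\ \mu(M\setminus A_L)+\mu\bigl\{x\in A_L:\,|f(x)-m_L|\ge t-|m_L-m|\bigr\},
\]
the first term is at most $2e^{-L^2}$ and the second, by \eqref{c.loc} with $s=t/2$, at most $2e^{-t^2/(4\kappa^2\sigma^2(\mu)L^2)}$; for this choice of $L$ both exponents equal $t/(2\kappa\,\sigma(\mu))$, so $\mu\{|f-m|\ge t\}\le 4e^{-t/(2\kappa\,\sigma(\mu))}$, which after adjusting the constant becomes $\le 2e^{-t/(c\,\sigma(\mu))}$ for all $t>0$, as claimed. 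The principal obstacle is the middle step: a priori nothing prevents the local means $m_L$ from escaping to infinity, and the argument goes through only because the super-Gaussian decay $\mu(M\setminus A_L)\le 2e^{-L^2}$ — exactly what $\int e^{|\nabla f|^2}\,d\mu\le 2$ provides — renders the increments $m_{L_{k+1}}-m_{L_k}$ and the annular contributions to $\int|f-m_{L_0}|\,d\mu$ summable; the exponential (rather than Gaussian) shape of the final tail is the price of balancing the two terms in the split over $L$.
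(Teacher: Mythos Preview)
Your argument is correct, but it takes a genuinely different route from the paper's. The paper sidesteps precisely what you call ``the principal obstacle'' --- controlling the drifting local means $m_L$ --- by symmetrizing: it works on the product space $(M\times M,\mu\otimes\mu)$ with the function $(x,y)\mapsto f(x)-f(y)$, proves the two-variable bound
\[
(\mu\otimes\mu)\{|f(x)-f(y)|\ge t\}\le 2\,e^{-t^2/c\sigma^2L^2}+2\,\mu\{|\nabla f|>L\}
\]
(this is Theorem~\ref{t.7.1}), and then, under the hypothesis $\int e^{|\nabla f|^2}\,d\mu\le 2$, optimizes with $L^2=t/\sigma$ to obtain $\|f(x)-f(y)\|_{L^{\psi_1}(\mu\otimes\mu)}\le C\sigma$. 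The passage to $\|f-m\|_{L^{\psi_1}(\mu)}\le C\sigma$ (and in particular the integrability of $f$) is then a one-line Jensen/Fubini argument: pick $y_0$ with $\int e^{|f(x)-f(y_0)|/r}\,d\mu(x)<\infty$ to get $f\in L^1(\mu)$, and then integrate out $y$ in $\int\!\!\int e^{|f(x)-f(y)|/r}\,d\mu(x)\,d\mu(y)\le 2$.

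Your approach stays on $M$, applies Theorem~\ref{t.1.1} level by level, and pins down the means via the identity $m_L-m_{L'}=\mu(A_L)^{-1}\int_{A_L\setminus A_{L'}}(f-m_{L'})\,d\mu$ together with Cauchy--Schwarz and a dyadic summation. This is heavier but perfectly valid; the quadratic absorption step works exactly because you chose $L_0$ with $2e^{-L_0^2}\le\tfrac{1}{32}$, and the dyadic increments are indeed summable. What the paper's symmetrization buys is brevity and the more general Theorem~\ref{t.7.1} (valid for arbitrary tails of $|\nabla f|$) as a byproduct; what your approach buys is a self-contained argument that never leaves $M$ and yields the $\mu$-tail bound on $f-m$ directly, without invoking the $\psi_1$-norm equivalence between $f-m$ and $f(x)-f(y)$.
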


Equivalently (up to an absolute factor), we have a Sobolev-type inequality
\[
\|f - m\|_{\psi_1} \leq c\sigma(\mu)\, \| \nabla f \|_{\psi_2},
\]
connecting the $\psi_1$-norm of $f-m$ with the $\psi_2$-norm of the modulus of the
gradient of $f$. We prove a more general version of this corollary in Section \ref{deviations}
(cf. Theorem \ref{t.7.1}). As will be explained in the same section, similar assertions may 
also be made about convex $f$ and product measures $\mu$ on $M = [-1,1]^n$, 
thus extending Talagrand's theorem to the class of non-Lipschitz functions.

In view of the right bound in \eqref{1.3} and \eqref{1.4}, the spread and subgaussian constants 
for restricted measures can be controled in terms of the logarithmic Sobolev constant 
$\rho$ via
\[
s^2(\mu_A) \, \leq \, 
\sigma^2(\mu_A) \, \leq \, c\,\log\Big(\frac{e}{\mu(A)}\Big)\,\frac{1}{\rho}.
\]
However, it may happen that $\rho = 0$ and $\sigma^2(\mu) = \infty$, while
$\lambda_1 > 0$ (e.g., for the product exponential distribution on $\R^n$). 
Then one may wonder whether one can estimate the spread constant of a restricted measure
in terms of the spectral gap. In that case there is a bound similar to \eqref{1.4}.

\begin{thm}\label{t.1.3}
Assume the metric probability space $(M,d,\mu)$ satisfies
a Poincar\'e-type inequality with $\lambda_1>0$. For any
$A \subset M$ with $\mu(A) > 0$, with some absolute constant $c$
\be \label{1.7}
s^2(\mu_A) \, \leq \, c\,\log^2\Big(\frac{e}{\mu(A)}\Big)\, \frac{1}{\lambda_1}.
\en
\end{thm}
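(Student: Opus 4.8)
The plan is to exploit the classical self-improvement of a spectral gap into exponential concentration, and then to convert exponential tails into a second-moment bound for the restricted measure by a truncation argument; the truncation is responsible for the loss of a $\log^2$ (rather than $\log$) factor.

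First I would reduce to functions defined on all of $M$. Given a function $f$ on $A$ with $\|f\|_{\rm Lip}\le 1$ for the metric inherited from $M$, the McShane formula $\tilde f(x)=\inf_{y\in A}\{f(y)+d(x,y)\}$ produces a $1$-Lipschitz extension of $f$ to $M$ with $\tilde f=f$ on $A$, so $\Var_{\mu_A}(f)=\Var_{\mu_A}(\tilde f)$ and it suffices to bound $\Var_{\mu_A}(g)$ for $1$-Lipschitz $g$ on $M$. Replacing $g$ by $g-\int g\,d\mu$, we may assume in addition that $\int g\,d\mu=0$, and then
\[
\Var_{\mu_A}(g)\ \le\ \int_A g^2\,d\mu_A\ =\ \frac{1}{\mu(A)}\int_A g^2\,d\mu .
\]

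Next I would invoke the Gromov--Milman / Borovkov--Utev exponential concentration: since $\mu$ satisfies a Poincar\'e inequality with constant $\lambda_1$, every $1$-Lipschitz $g$ with $\int g\,d\mu=0$ satisfies $\int \exp\!\big(\sqrt{\lambda_1}\,|g|/C_0\big)\,d\mu\le 2$ for a universal constant $C_0$ (apply the Poincar\'e inequality to $e^{sg/2}$ and iterate in $s$; equivalently, cite the known bound $\|g\|_{\psi_1}\le C_0/\sqrt{\lambda_1}$). By Markov's inequality this yields the tail estimate $\mu\{|g|>t\}\le 2\exp(-\sqrt{\lambda_1}\,t/C_0)$ for all $t>0$. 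Writing $p=\mu(A)$ and $\beta=\sqrt{\lambda_1}/C_0$, and using $\mu(A\cap\{|g|>t\})\le\min\big(p,\,2e^{-\beta t}\big)$,
\[
\int_A g^2\,d\mu\ =\ \int_0^\infty 2t\,\mu\big(A\cap\{|g|>t\}\big)\,dt\ \le\ \int_0^\infty 2t\,\min\!\big(p,\,2e^{-\beta t}\big)\,dt .
\]
Splitting this integral at the crossover level $t_0=\beta^{-1}\log(2/p)$, an elementary computation gives $\int_A g^2\,d\mu\le p\,\beta^{-2}\big(\log^2(2/p)+2\log(2/p)+2\big)$, hence $\int_A g^2\,d\mu\le c\,p\,\lambda_1^{-1}\log^2(e/p)$. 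Dividing by $p=\mu(A)$ and taking the supremum over $f$ yields \eqref{1.7}.

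I do not expect a serious obstacle: the only non-elementary input is exponential concentration from a spectral gap, which is classical and quotable, and the remainder is the truncation computation. The one point to handle with care is the bookkeeping that produces $\log^2$ instead of $\log$; this is intrinsic rather than an artifact, since the crossover level $t_0\sim\lambda_1^{-1/2}\log(1/\mu(A))$ contributes $t_0^2\sim\lambda_1^{-1}\log^2(1/\mu(A))$ to the truncated second moment, reflecting that a Poincar\'e inequality yields only exponential (not subgaussian) tails, in contrast with Theorem~\ref{t.1.1}. One should also observe that $\log(2/\mu(A))$ and $\log(e/\mu(A))$ are comparable and both bounded below by a positive constant, so that the stated bound is valid and clean for every $\mu(A)\in(0,1]$.
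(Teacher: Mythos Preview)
Your argument is correct and reaches the same conclusion as the paper, but the final conversion step is organized differently. The paper first proves a general $\psi_1$-restriction inequality $\|f\|_{L^{\psi_1}(\mu_A)}\le 6e\log(e/\mu(A))\,\|f\|_{L^{\psi_1}(\mu)}$ (Proposition~\ref{p.3.3}) via comparison of $L^p$-norms and optimization in $p$; combined with the Gromov--Milman/Aida--Strook bound $\|f\|_{L^{\psi_1}(\mu)}\le C/\sqrt{\lambda_1}$ and the trivial inequality $\|f\|_2^2\le 2\|f\|_{\psi_1}^2$, this yields \eqref{1.7}, with the square on the logarithm coming from squaring the $\psi_1$-norm. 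You bypass the Orlicz machinery entirely: after the same McShane extension and the same exponential tail input, you run a layer-cake/truncation computation directly on $\int_A g^2\,d\mu$, splitting at $t_0=\beta^{-1}\log(2/p)$; the $\log^2$ then appears as $t_0^2$. Your route is more elementary and self-contained for this particular statement, while the paper's route has the advantage of isolating a reusable restriction inequality for $\psi_1$-norms that fits into the same framework used for Theorem~\ref{t.1.1}. Both proofs rest on the same non-elementary ingredient, namely exponential concentration under a spectral gap, and both ultimately encode the same mechanism for the loss of a second logarithm.
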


It should be mentioned that the logarithmic terms in \eqref{1.4} and \eqref{1.7} may not be
removed and are actually asymptotically optimal as functions of $\mu(A)$,
as $\mu(A)$ is getting small, see Section \ref{opt}.

Our contribution below is organized into sections as follows:

\vskip2mm

2. Bounds on $\psi_\alpha$-norms for restricted measures.

3. Proof of Theorem \ref{t.1.1}. Transport-entropy formulation.

4. Proof of Theorem \ref{t.1.3}. Spectral gap.

5. Examples.

6. Deviations for non-Lipschitz functions.

7. Optimality.

8. Appendix.

\section{Bounds on $\psi_\alpha$-norms for restricted measures}
\setcounter{equation}{0}

A measurable function $f$ on the probability space $(M,\mu)$ is said to have
a finite $\psi_\alpha$-norm, $\alpha \geq 1$, if for some $r > 0$,
\[
\int e^{(|f|/r)^\alpha}\,d\mu \leq 2.
\]
The infimum over all such $r$ represents the $\psi_\alpha$-norm $\|f\|_{\psi_\alpha}$ 
or $\|f\|_{L^{\psi_\alpha}(\mu)}$, which is just the Orlicz norm associated 
with the Young function $\psi_\alpha(t) = e^{|t|^\alpha} - 1$. 

We are mostly interested in the particular cases $\alpha = 1$ and $\alpha = 2$.
In this section we recall well-known relations between the $\psi_1$ and $\psi_2$-norms
and the usual $L^p$-norms  $\|f\|_p = \|f\|_{L^p(\mu)} = (\int |f|^p\,d\mu)^{1/p}$.
For the readers' convenience, we include the proof in the appendix.

\begin{lem}\label{l.2.1}
We have
\be \label{2.1}
\sup_{p \geq 1} \frac{\|f\|_p}{\sqrt{p}} \leq
\|f\|_{L^{\psi_2}(\mu)} \leq 4\, \sup_{p \geq 1} \frac{\|f\|_p}{\sqrt{p}},
\en
\be \label{2.2}
\sup_{p \geq 1} \frac{\|f\|_p}{p} \leq
\|f\|_{L^{\psi_1}(\mu)} \leq 6\, \sup_{p \geq 1} \frac{\|f\|_p}{p}.
\en
\end{lem}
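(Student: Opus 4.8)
The plan is to prove the two-sided comparison between the Orlicz $\psi_\alpha$-norms and the scaled sup of $L^p$-norms directly from the definitions, treating $\alpha=1$ and $\alpha=2$ in parallel (and really the same argument works for general $\alpha \ge 1$ with the constants $4$ and $6$ depending on $\alpha$). Set $N_\alpha(f) = \sup_{p\ge 1}\|f\|_p/p^{1/\alpha}$; we must show $N_\alpha(f) \le \|f\|_{\psi_\alpha} \le C_\alpha N_\alpha(f)$ with $C_2 = 4$, $C_1 = 6$. By homogeneity we may assume the relevant normalization, e.g.\ for the left inequalities assume $\|f\|_{\psi_\alpha} = 1$, and for the right inequalities assume $N_\alpha(f) = 1$.

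For the lower bound $N_\alpha(f) \le \|f\|_{\psi_\alpha}$: assuming $\|f\|_{\psi_\alpha}\le 1$ we have $\int e^{|f|^\alpha}\,d\mu \le 2$. For integer $k\ge 1$, expand $e^{|f|^\alpha} \ge \frac{|f|^{\alpha k}}{k!}$, so $\int |f|^{\alpha k}\,d\mu \le 2\,k!$, i.e.\ $\|f\|_{\alpha k} \le (2\, k!)^{1/(\alpha k)}$. Using $k! \le k^k$ gives $\|f\|_{\alpha k} \le (2)^{1/(\alpha k)} k^{1/\alpha} \le 2^{1/\alpha} k^{1/\alpha}$; since $\alpha k$ ranges over a cofinal subset and $\|f\|_p$ is nondecreasing in $p$ while $p^{1/\alpha}$ grows, a short interpolation between consecutive values $\alpha k \le p \le \alpha(k+1)$ upgrades this to $\|f\|_p \le C p^{1/\alpha}$ for all $p\ge 1$ with a clean constant; to get exactly the constant $1$ one instead argues directly: by Markov's inequality $\mu(|f|>t) \le 2 e^{-t^\alpha}$, whence $\|f\|_p^p = \int_0^\infty p t^{p-1}\mu(|f|>t)\,dt \le 2p\int_0^\infty t^{p-1} e^{-t^\alpha}\,dt = \frac{2p}{\alpha}\Gamma(p/\alpha)$, and one checks $\left(\frac{2p}{\alpha}\Gamma(p/\alpha)\right)^{1/p} \le p^{1/\alpha}$ for all $p\ge 1$ (this is where the tail-integral representation is cleaner than the series). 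This yields $\|f\|_p/p^{1/\alpha}\le 1$ for every $p$, hence $N_\alpha(f)\le 1$.

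For the upper bound $\|f\|_{\psi_\alpha} \le C_\alpha N_\alpha(f)$: assume $N_\alpha(f)=1$, so $\|f\|_p \le p^{1/\alpha}$ for all $p\ge 1$. We want to find $r$ (as small as possible, $r = C_\alpha$) with $\int e^{(|f|/r)^\alpha}\,d\mu \le 2$. Expand the exponential in a power series:
\[
\int e^{(|f|/r)^\alpha}\,d\mu = 1 + \sum_{k=1}^\infty \frac{1}{k!\, r^{\alpha k}} \int |f|^{\alpha k}\,d\mu
\le 1 + \sum_{k=1}^\infty \frac{(\alpha k)^{k}}{k!\, r^{\alpha k}},
\]
using $\int |f|^{\alpha k}\,d\mu = \|f\|_{\alpha k}^{\alpha k} \le (\alpha k)^{k}$ (valid since $\alpha k \ge 1$). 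By Stirling $k! \ge (k/e)^k$, so the $k$-th term is at most $\big(\alpha e/r^\alpha\big)^k$, a geometric series which sums to at most $1$ provided $\alpha e/r^\alpha \le 1/2$, i.e.\ $r \ge (2\alpha e)^{1/\alpha}$. For $\alpha = 2$ this gives $r \ge (4e)^{1/2} = 2\sqrt{e} \approx 3.30 < 4$, and for $\alpha = 1$, $r \ge 2e \approx 5.44 < 6$; so the stated constants $4$ and $6$ suffice (with a little room, which also absorbs the fact that we need $\alpha k \ge 1$ — true for all $k\ge 1$ when $\alpha \ge 1$). This proves \eqref{2.1} and \eqref{2.2}.

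The only mildly delicate point — the ``main obstacle'' such as it is — is getting the \emph{sharp} constant $1$ on the left-hand inequalities rather than some larger absolute constant; the naive series bound $k!\le k^k$ loses a factor and forces an interpolation step between the values $p=\alpha k$, whereas the tail-integral computation $\|f\|_p^p \le \frac{2p}{\alpha}\Gamma(p/\alpha)$ followed by the elementary inequality $\big(\frac{2p}{\alpha}\Gamma(p/\alpha)\big)^{1/p}\le p^{1/\alpha}$ (checked by taking logarithms and using $\log\Gamma$ convexity / standard bounds on the Gamma function) gives it directly. Everything else is a routine power-series-plus-Stirling estimate, and since the lemma is standard the details are deferred to the appendix as announced.
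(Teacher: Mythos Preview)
Your argument for the \emph{right} inequalities is correct and is a genuine alternative to the paper's proof. The paper optimizes Chebyshev's inequality in $p$ to get a tail bound of the form $\mu\{|f|\ge t\}\le 2^{-c t^\alpha}$ for large $t$, and then integrates $e^{\ep|f|^\alpha}$ by parts against this tail; you instead expand $e^{(|f|/r)^\alpha}$ as a power series, plug in the moment bound $\int|f|^{\alpha k}\,d\mu\le(\alpha k)^k$, and sum a geometric series via $k!\ge(k/e)^k$. Your route is slightly shorter and yields $r=(2\alpha e)^{1/\alpha}$ uniformly in $\alpha$, which is nice; the paper's route produces somewhat sharper numerical constants (e.g.\ $\sqrt{10/\log 2}\approx 3.80$ versus your $2\sqrt{e}\approx 3.30$ for $\alpha=2$---actually both beat $4$) and makes the tail behaviour explicit, which is closer in spirit to how the bounds get used later.

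There is, however, a genuine gap in your proof of the \emph{left} inequalities. The claimed bound
\[
\Big(\tfrac{2p}{\alpha}\,\Gamma(p/\alpha)\Big)^{1/p}\ \le\ p^{1/\alpha}\qquad(p\ge 1)
\]
is false: for $\alpha=2$, $p=1$ the left side equals $\Gamma(1/2)=\sqrt{\pi}\approx 1.77$ while the right side is $1$; for $\alpha=1$, $p=1$ the left side equals $2$. The trouble is the factor $2$ in the tail bound $\mu\{|f|>t\}\le 2e^{-t^\alpha}$, which you cannot remove and which the Gamma integral then carries. The paper avoids this by a pointwise trick rather than a tail integral: write
\[
\int|f|^p\,d\mu \ = \ \int\big(|f|^p e^{-|f|^\alpha}\big)\,e^{|f|^\alpha}\,d\mu \ \le \ \Big(\max_{t\ge 0} t^p e^{-t^\alpha}\Big)\cdot 2,
\]
and compute the maximum at $t_0=(p/\alpha)^{1/\alpha}$ to get $\|f\|_p^p\le 2\,(p/(\alpha e))^{p/\alpha}$, hence $\|f\|_p/p^{1/\alpha}\le 2^{1/p}/(\alpha e)^{1/\alpha}<1$ for all $p\ge 1$. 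This is the missing idea you need to recover the sharp constant $1$.
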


Given a measurable subset $A$ of $M$ with $\mu(A) > 0$, we consider
the normalized restricted measure $\mu_A$ on $M$, i.e.,
\[
\mu_A(B) = \frac{\mu(A \cap B)}{\mu(A)}, \qquad B \subset M.
\]
Our basic tool leading to Theorem \ref{t.1.1} will be the following assertion.

\begin{prop}\label{p.3.1}
For any measurable function $f$ on $M$,
\be \label{3.1}
\|f\|_{L^{\psi_2}(\mu_A)} \, \leq \, 4e\,\log^{1/2}\Big(\frac{e}{\mu(A)}\Big)\, 
\|f\|_{L^{\psi_2}(\mu)}.
\en
\end{prop}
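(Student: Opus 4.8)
\textbf{Proof plan for Proposition \ref{p.3.1}.}

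The plan is to pass through $L^p$-norms using the equivalence in Lemma \ref{l.2.1}. By \eqref{2.1} it suffices to compare $\sup_{p\geq 1}\|f\|_{L^p(\mu_A)}/\sqrt p$ with $\sup_{p\geq 1}\|f\|_{L^p(\mu)}/\sqrt p$, so fix $p\geq 1$ and estimate $\|f\|_{L^p(\mu_A)}$. First I would write, directly from the definition of the restricted measure,
\be
\|f\|_{L^p(\mu_A)}^p = \int |f|^p\,d\mu_A = \frac{1}{\mu(A)}\int_A |f|^p\,d\mu \leq \frac{1}{\mu(A)}\int_M |f|^p\,d\mu = \frac{1}{\mu(A)}\,\|f\|_{L^p(\mu)}^p.
\en
This already gives $\|f\|_{L^p(\mu_A)} \leq \mu(A)^{-1/p}\,\|f\|_{L^p(\mu)}$, but the prefactor $\mu(A)^{-1/p}$ blows up for small $p$, so a naive supremum over $p\geq 1$ is not good enough; the gain must come from trading $p$ against $\mu(A)$.

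The key trick is to not compare the $p$-norm of $f$ under $\mu_A$ with the $p$-norm under $\mu$, but with a \emph{larger} exponent $q = q(p) > p$ under $\mu$, chosen so that $\mu(A)^{-1/p}$ is absorbed. Concretely, by Jensen's inequality (or H\"older) applied on the probability space $(M,\mu_A)$, for any $q\geq p$ we have $\|f\|_{L^p(\mu_A)}\leq \|f\|_{L^q(\mu_A)} \leq \mu(A)^{-1/q}\|f\|_{L^q(\mu)}$. Now choose $q = p\,\big(1 + \log\frac{1}{\mu(A)}\big)$, so that $\mu(A)^{-1/q} = \exp\!\big(\frac{1}{q}\log\frac{1}{\mu(A)}\big) \leq \exp\!\big(\frac{1}{p}\big) \leq e$ using $\log\frac1{\mu(A)}\leq q/p$ and $p\geq 1$. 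Then
\be
\frac{\|f\|_{L^p(\mu_A)}}{\sqrt p} \leq e\,\frac{\|f\|_{L^q(\mu)}}{\sqrt p} = e\,\sqrt{\frac{q}{p}}\;\frac{\|f\|_{L^q(\mu)}}{\sqrt q} \leq e\,\sqrt{1 + \log\tfrac{1}{\mu(A)}}\;\sup_{r\geq 1}\frac{\|f\|_{L^r(\mu)}}{\sqrt r}.
\en
Note $1 + \log\frac{1}{\mu(A)} = \log\frac{e}{\mu(A)}$, so taking the supremum over $p\geq 1$ on the left and invoking both halves of \eqref{2.1} yields
\be
\|f\|_{L^{\psi_2}(\mu_A)} \leq 4\sup_{p\geq 1}\frac{\|f\|_{L^p(\mu_A)}}{\sqrt p} \leq 4e\,\log^{1/2}\!\Big(\frac{e}{\mu(A)}\Big)\,\sup_{r\geq 1}\frac{\|f\|_{L^r(\mu)}}{\sqrt r} \leq 4e\,\log^{1/2}\!\Big(\frac{e}{\mu(A)}\Big)\,\|f\|_{L^{\psi_2}(\mu)},
\en
which is exactly \eqref{3.1}. (One should check the edge case $\mu(A)=1$, where the statement is trivial, and verify $q\geq p$ always holds since $\log\frac{1}{\mu(A)}\geq 0$.)

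The only real subtlety—hardly an obstacle—is the choice of $q$: it has to be large enough that $\mu(A)^{-1/q}$ is bounded by an absolute constant, yet the resulting factor $\sqrt{q/p}$ must come out as $\log^{1/2}(e/\mu(A))$ rather than something worse; the choice $q/p = \log(e/\mu(A))$ does both simultaneously. Everything else is Lemma \ref{l.2.1}, monotonicity of $L^p$-norms on a probability space, and the elementary bound $\int_A|f|^p\,d\mu\leq\int_M|f|^p\,d\mu$.
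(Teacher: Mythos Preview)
Your proof is correct and follows essentially the same route as the paper's: both pass through Lemma \ref{l.2.1}, use the elementary bound $\|f\|_{L^q(\mu_A)}\leq \mu(A)^{-1/q}\|f\|_{L^q(\mu)}$, and optimize by taking $q$ (respectively $p$) of order $\log(e/\mu(A))$ so that $\mu(A)^{-1/q}\leq e$. The only cosmetic difference is that the paper normalizes $\|f\|_{L^{\psi_2}(\mu)}=1$ and optimizes a single parameter $p$ at the end, whereas you keep $f$ general and make the substitution $q=p\log(e/\mu(A))$ explicit for each $p$; the key estimate and the resulting constant $4e$ are identical.
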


\begin{proof} Assume that $\|f\|_{L^{\psi_2}(\mu)} = 1$
and fix $p \geq 1$. By the left inequality in \eqref{2.1}, for any $q \geq 1$,
\[
q^{q/2} \geq \int |f|^q\,d\mu \geq \mu(A) \int |f|^q\,d\mu_A,
\]
so
\[
\frac{\|f\|_{L^q(\mu_A)}}{\sqrt{q}} \leq \bigg(\frac{1}{\mu(A)}\bigg)^{1/q}.
\]
But by the right inequality in \eqref{2.1},
\[
\|f\|_{\psi_2} \, \leq \,
4\, \sup_{q \geq 1} \frac{\|f\|_q}{\sqrt{q}} \, \leq \,
4 \sqrt{p}\, \sup_{q \geq p} \frac{\|f\|_q}{\sqrt{q}}.
\]
Applying it on the space $(M,\mu_A)$, we then get
\bee
\|f\|_{L^{\psi_2}(\mu_A)} 
 & \leq &
4 \sqrt{p}\, \sup_{q \geq p} \frac{\|f\|_{L^q(\mu_A)}}{\sqrt{q}} \\ 
 & \leq &
4 \sqrt{p}\,\sup_{q \geq p}\, \bigg(\frac{1}{\mu(A)}\bigg)^{1/q} \, = \,
4 \sqrt{p}\, \bigg(\frac{1}{\mu(A)}\bigg)^{1/p}.
\ene
The obtained inequality,
\[
\|f\|_{L^{\psi_2}(\mu_A)} \, \leq \, 4 \sqrt{p}\,
\bigg(\frac{1}{\mu(A)}\bigg)^{1/p},
\]
holds true for any $p \geq 1$ and therefore may be optimized over $p$.
Choosing $p = \log\frac{e}{\mu(A)}$, we arrive at \eqref{3.1}.
\end{proof}

A possible weak point in the bound \eqref{3.1} is that the means of $f$ are not involved.
For example, in applications, if $f$ were defined only on $A$ and had $\mu_A$-mean zero, 
we might need to find an extension of $f$ to the whole space $M$ keeping the mean zero 
with respect to $\mu$. In fact, this should not create any difficulty, since one may
work with the symmetrization of $f$. 

More precisely, we may apply Proposition \ref{p.3.1} on the product space 
$(M \times M, \mu \otimes \mu)$ to the product sets
$A \times A$ and functions of the form $f(x) - f(y)$. Then we get
\[
\|f(x)-f(y)\|_{L^{\psi_2}(\mu_A \otimes \mu_A)} \, \leq \,
4e\,\log^{1/2}\bigg(\frac{e}{\mu(A)^2}\bigg)\, 
\|f(x) - f(y)\|_{L^{\psi_2}(\mu \otimes \mu)}.
\]
Since
$
\log\big(\frac{e}{\mu(A)^2}\big) \leq 2\log\big(\frac{e}{\mu(A)}\big),
$
we arrive at:

\begin{cor}\label{c.3.2}
For any measurable function $f$ on $M$,
\[
\|f(x)-f(y)\|_{L^{\psi_2}(\mu_A \otimes \mu_A)} \, \leq \,
4e\sqrt{2}\,\log^{1/2}\Big(\frac{e}{\mu(A)}\Big)\, 
\|f(x) - f(y)\|_{L^{\psi_2}(\mu \otimes \mu)}.
\]
\end{cor}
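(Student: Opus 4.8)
The plan is to deduce Corollary~\ref{c.3.2} directly from Proposition~\ref{p.3.1} applied on the product space. First I would set up the product metric probability space $(M \times M, \mu \otimes \mu)$ and observe that the subset of interest is $A \times A$, which has product measure $(\mu \otimes \mu)(A \times A) = \mu(A)^2$. The restricted measure of $\mu \otimes \mu$ to this product set factors as a product, namely $(\mu \otimes \mu)_{A \times A} = \mu_A \otimes \mu_A$, since restricting each coordinate independently is the same as restricting the product. I would then apply Proposition~\ref{p.3.1} on this product space to the particular function $g(x,y) = f(x) - f(y)$, which is measurable whenever $f$ is. This immediately gives
\[
\|f(x) - f(y)\|_{L^{\psi_2}(\mu_A \otimes \mu_A)} \, \leq \, 4e \, \log^{1/2}\!\Big(\frac{e}{\mu(A)^2}\Big)\, \|f(x) - f(y)\|_{L^{\psi_2}(\mu \otimes \mu)}.
\]

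The second step is the elementary estimate on the logarithmic factor. I would verify that $\log\big(e/\mu(A)^2\big) \leq 2 \log\big(e/\mu(A)\big)$: writing $t = \mu(A) \in (0,1]$, this is the inequality $1 + 2\log(1/t) \leq 2(1 + \log(1/t)) = 2 + 2\log(1/t)$, which holds trivially since $1 \leq 2$. Taking square roots and pulling the resulting factor $\sqrt{2}$ out front yields the claimed constant $4e\sqrt{2}$, completing the argument.

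I do not anticipate any genuine obstacle here — the corollary is essentially a formal consequence of the proposition together with the tensorization of restricted measures. The one point that merits a sentence of care is the identity $(\mu \otimes \mu)_{A \times A} = \mu_A \otimes \mu_A$, which should be checked on product (rectangle) sets $B_1 \times B_2$ and then extended by the uniqueness of the product measure; this is routine but worth stating explicitly since it is the structural fact that makes the whole reduction work. Everything else is bookkeeping with constants.
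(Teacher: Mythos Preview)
Your proposal is correct and follows exactly the paper's own argument: apply Proposition~\ref{p.3.1} on the product space $(M\times M,\mu\otimes\mu)$ with the set $A\times A$ and the function $g(x,y)=f(x)-f(y)$, then use $\log(e/\mu(A)^2)\le 2\log(e/\mu(A))$. The only addition you make is spelling out the identity $(\mu\otimes\mu)_{A\times A}=\mu_A\otimes\mu_A$ and the verification of the logarithmic inequality, both of which the paper leaves implicit.
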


Let us now derive an analog of Proposition \ref{p.3.1} for the $\psi_1$-norm, 
using similar arguments. Assume that $\|f\|_{L^{\psi_1}(\mu)} = 1$
and fix $p \geq 1$. By the left inequality in \eqref{2.2}, for any $q \geq 1$,
\[
q^q \geq \int |f|^q\,d\mu \geq \mu(A) \int |f|^q\,d\mu_A,
\]
so
\[
\frac{\|f\|_{L^q(\mu_A)}}{q} \leq \Big(\frac{1}{\mu(A)}\Big)^{1/q}.
\]
But, by the inequality \eqref{2.2},
\[
\|f\|_{L^{\psi_1}} \, \leq \, 6\, \sup_{q \geq 1} \frac{\|f\|_q}{q} \, \leq \,
6p\, \sup_{q \geq p} \frac{\|f\|_q}{q}.
\]
Applying it on the space $(M,\mu_A)$, we get
\bee
\|f\|_{L^{\psi_1}(\mu_A)} 
 & \leq &
6p\, \sup_{q \geq p} \frac{\|f\|_{L^q(\mu_A)}}{q} \\ 
 & \leq &
6p\,\sup_{q \geq p}\, \Big(\frac{1}{\mu(A)}\Big)^{1/q} \, = \,
6p\,\Big(\frac{1}{\mu(A)}\Big)^{1/p}.
\ene
The obtained inequality,
\[
\|f\|_{L^{\psi_1}(\mu_A)} \, \leq \, 6p\,\Big(\frac{1}{\mu(A)}\Big)^{1/p},
\]
holds true for any $p \geq 1$ and therefore may be optimized over $p$.
Choosing $p = \log\frac{e}{\mu(A)}$, we arrive at:

\begin{prop}\label{p.3.3}
For any measurable function $f$ on $M$, we have
\[
\|f\|_{L^{\psi_1}(\mu_A)} \, \leq \, 6e\,\log\Big(\frac{e}{\mu(A)}\Big)\, 
\|f\|_{L^{\psi_1}(\mu)}.
\]
\end{prop}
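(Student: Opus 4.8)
The plan is to imitate the proof of Proposition \ref{p.3.1} line for line, replacing the $\psi_2$--$L^p$ comparison \eqref{2.1} by its $\psi_1$ analogue \eqref{2.2}. First I would dispose of the trivial case: if $\|f\|_{L^{\psi_1}(\mu)} = \infty$ there is nothing to prove, and since both sides of the asserted inequality are positively homogeneous in $f$, I may assume $\|f\|_{L^{\psi_1}(\mu)} = 1$.

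The core of the argument is to transfer control of the $L^q$-norms from $\mu$ to $\mu_A$. Since $\mu_A(B) = \mu(A\cap B)/\mu(A) \le \mu(B)/\mu(A)$, we have $\int |f|^q\,d\mu_A \le \frac{1}{\mu(A)}\int |f|^q\,d\mu$ for every $q \ge 1$, and the left-hand inequality in \eqref{2.2} bounds $\int |f|^q\,d\mu$ by $q^q$; hence
\[
\frac{\|f\|_{L^q(\mu_A)}}{q} \,\le\, \Big(\frac{1}{\mu(A)}\Big)^{1/q}, \qquad q \ge 1.
\]
On the other hand, applying the right-hand inequality in \eqref{2.2} on the space $(M,\mu_A)$ and dropping the values $q < p$ from the supremum gives, for any fixed $p \ge 1$,
\[
\|f\|_{L^{\psi_1}(\mu_A)} \,\le\, 6 \sup_{q \ge 1}\frac{\|f\|_{L^q(\mu_A)}}{q} \,\le\, 6p \sup_{q \ge p}\frac{\|f\|_{L^q(\mu_A)}}{q} \,\le\, 6p \sup_{q \ge p}\Big(\frac{1}{\mu(A)}\Big)^{1/q} = 6p\Big(\frac{1}{\mu(A)}\Big)^{1/p}.
\]

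It then remains to optimize $6p\,(1/\mu(A))^{1/p}$ over $p \ge 1$. Writing $(1/\mu(A))^{1/p} = \exp\big(\frac{1}{p}\log\frac{1}{\mu(A)}\big)$, the unconstrained minimizer is $p = \log\frac{1}{\mu(A)}$; to remain in the admissible range $p \ge 1$ — which is the only delicate point, relevant when $\mu(A)$ is not small — I would instead take $p = \log\frac{e}{\mu(A)} = 1 + \log\frac{1}{\mu(A)} \ge 1$. With this choice $\frac{1}{p}\log\frac{1}{\mu(A)} \le 1$, so $(1/\mu(A))^{1/p} \le e$, and the bound becomes $6e\,\log\frac{e}{\mu(A)}$; reinstating the factor $\|f\|_{L^{\psi_1}(\mu)}$ yields the claim.

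I do not anticipate any genuine obstacle here; the proof is routine once Lemma \ref{l.2.1} is available. The two points worth care are the reduction to the normalized case and the choice $p=\log(e/\mu(A))$ rather than $\log(1/\mu(A))$, which is exactly the device already used in Proposition \ref{p.3.1} to keep $p\ge 1$ while still producing a clean logarithmic factor. An alternative, slightly slicker route would be to deduce the statement from Proposition \ref{p.3.1} applied to $|f|^{1/2}$, using the identity $\|g\|_{L^{\psi_1}} = \big\||g|^{1/2}\big\|_{L^{\psi_2}}^2$; this works but produces the worse constant $16e^2$, so I prefer the self-contained $L^p$-comparison argument above.
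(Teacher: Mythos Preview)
Your proposal is correct and follows the paper's own argument essentially line for line: normalize to $\|f\|_{L^{\psi_1}(\mu)}=1$, use the left inequality of \eqref{2.2} together with $\int|f|^q\,d\mu_A\le \mu(A)^{-1}\int|f|^q\,d\mu$ to bound $\|f\|_{L^q(\mu_A)}/q$, then the right inequality of \eqref{2.2} on $(M,\mu_A)$ with the step $\sup_{q\ge 1}\|f\|_q/q\le p\sup_{q\ge p}\|f\|_q/q$, and finally the choice $p=\log(e/\mu(A))$. Your added remarks on the trivial case, on why $p=\log(e/\mu(A))$ rather than $\log(1/\mu(A))$, and on the alternative via $|f|^{1/2}$ are all fine side comments, but the core argument is exactly the paper's.
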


Similarly to Corollary \ref{c.3.2} one may write down this relation on the product 
probability space $(M \times M,\mu \otimes \mu)$ with the functions of the form 
$\tilde f(x,y) = f(x)-f(y)$ and the product sets $\tilde A = A \times A$. 
Then we get
\be \label{3.2}
\|f(x) - f(y)\|_{L^{\psi_1}(\mu_A \otimes \mu_A)} \, \leq \, 
12\,e\,\log\Big(\frac{e}{\mu(A)}\Big)\, \|f(x) - f(y)\|_{L^{\psi_1}(\mu \otimes \mu)}.
\en

\section{Proof of Theorem \ref{t.1.1}. Transport-entropy formulation}
\setcounter{equation}{0}

The finiteness of the subgaussian constant for a given metric probability space 
$(M,d,\mu)$ means that $\psi_2$-norms of Lipschitz functions on $M$ with mean zero 
are uniformly bounded. Equivalently, for any (for all) $x_0 \in M$, we have that,
for some $\lambda>0$,
\[
\int e^{d(x,x_0)^2/\lambda^2}\,d\mu(x) < \infty.
\]
The definition \eqref{1.1} of $\sigma^2(\mu)$ inspires to consider another norm-like quantity
\[
\sigma^2_f = \sup_{t \neq 0} \bigg[\frac{1}{t^2/2} \log\int e^{tf} \,d\mu\bigg].
\]
Here is a well-known relation (with explicit numerical constants) which holds 
in the setting of an abstract probability space $(M,\mu)$. Once again, we include a proof in the appendix for completeness.

\begin{lem}\label{l.4.1}
If $f$ has mean zero and finite $\psi_2$-norm, then
\[
\frac{1}{\sqrt{6}}\, \|f\|_{\psi_2}^2 \leq \sigma^2_f \leq 4\, \|f\|_{\psi_2}^2.
\]
\end{lem}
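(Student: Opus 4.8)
\textbf{Proof plan for Lemma \ref{l.4.1}.} The plan is to relate both quantities to the $L^p$-norms of $f$ via the moment characterization of the $\psi_2$-norm in Lemma \ref{l.2.1}, so that the whole statement reduces to a comparison of $\sigma_f^2$ with $\sup_{p\geq 1}\|f\|_p^2/p$. For the lower bound on $\sigma_f^2$, I would start from the definition of $\sigma_f^2$, which gives $\int e^{tf}\,d\mu \leq e^{\sigma_f^2 t^2/2}$ for all $t$; expanding the exponential in a power series and using that $f$ has mean zero to discard (or rather, retain usefully) the linear term, I extract bounds on the even moments $\int f^{2k}\,d\mu$ in terms of $(\sigma_f^2)^k$ and $k!$. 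Concretely, comparing Taylor coefficients of $\mathbb{E}\,e^{tf}$ and $e^{\sigma_f^2 t^2/2}$ term by term yields $\mathbb{E}\,f^{2k}\leq (2k)!\,(\sigma_f^2/2)^k/k!$, hence $\|f\|_{2k}\leq C\sqrt{k}\,\sigma_f$ after Stirling, and combined with interpolation between consecutive even exponents this controls $\sup_{p\geq 1}\|f\|_p/\sqrt p$ by a constant multiple of $\sigma_f$. Feeding this into the right inequality of \eqref{2.1} gives $\|f\|_{\psi_2}^2\leq 6\,\sigma_f^2$, which is the claimed lower bound $\sigma_f^2\geq \frac{1}{\sqrt 6}\|f\|_{\psi_2}^2$ (one tracks the numerical constants carefully to land on $1/\sqrt 6$, or simply verifies the stated constant suffices).

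For the upper bound $\sigma_f^2\leq 4\|f\|_{\psi_2}^2$, I would go the other direction: assume $\|f\|_{\psi_2}=1$, so that $\int e^{f^2}\,d\mu\leq 2$, and estimate the Laplace transform directly. Using the elementary pointwise inequality $tf \leq \tfrac{1}{4}t^2 b^2 + f^2/b^2$ (valid for any $b>0$, here I would take $b^2$ equal to a suitable absolute constant, or optimize), one gets $\int e^{tf}\,d\mu \leq e^{t^2 b^2/4}\int e^{f^2/b^2}\,d\mu$. The factor $\int e^{f^2/b^2}\,d\mu$ must be bounded by something like $e^{ct^2}$-free, i.e. by an absolute constant, which forces $b\geq 1$; taking $b=1$ gives $\int e^{tf}\,d\mu\leq 2\,e^{t^2/4}$, and then one absorbs the factor $2$ into the exponent for $|t|$ not too small, while for small $|t|$ one uses the mean-zero hypothesis and a second-order Taylor estimate ($\int e^{tf}\,d\mu\leq 1+\tfrac{t^2}{2}\mathbb{E}f^2 e^{|t||f|}\cdots$, or more cleanly $\log\int e^{tf}\,d\mu \leq \tfrac{t^2}{2}\,\mathbb{E}f^2 e^{t^2/\cdots}$ type bounds). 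Balancing these two regimes yields $\log\int e^{tf}\,d\mu\leq 2t^2$ for all $t$, i.e. $\sigma_f^2\leq 4$.

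The step I expect to be the main obstacle is the small-$t$ regime in the upper bound: the crude bound $\int e^{tf}\,d\mu\leq 2e^{t^2/4}$ has the wrong constant ($2>1$) at $t=0$, and without exploiting $\mathbb{E}f=0$ one cannot recover a clean $e^{\sigma_f^2 t^2/2}$ form with a small constant. The fix is to split at a threshold $|t|=t_0$: for $|t|\geq t_0$ the multiplicative constant $2$ is dominated by growth of $e^{3t^2/4}$ (say), while for $|t|\leq t_0$ one Taylor-expands and uses that the linear term vanishes, bounding $\log\mathbb{E}\,e^{tf}$ by $\tfrac{t^2}{2}\sup_{|s|\leq t_0}\mathbb{E}f^2 e^{sf}$ and controlling the latter supremum by $\|f\|_{\psi_2}$ through Cauchy–Schwarz and the definition of the $\psi_2$-norm. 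Getting the two pieces to match with the specific constant $4$ is the part that requires genuine (though still elementary) care; everything else is routine manipulation of moments and Stirling's formula. Since the paper defers this to the appendix anyway, I would present the argument in the above two-sided form and let the explicit constant-chasing be carried out there.
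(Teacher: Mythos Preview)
Your plan for the upper bound $\sigma_f^2\leq 4\|f\|_{\psi_2}^2$ coincides with the paper's: normalize $\|f\|_{\psi_2}=1$, split at a threshold in $|t|$, use $tf\leq(t^2+f^2)/2$ and $\int e^{f^2}\,d\mu=2$ for large $|t|$, and for small $|t|$ exploit mean zero through a second-order Taylor bound (the paper bounds $u''(t)\leq\int f^2 e^{tf}\,d\mu\leq 4$ on $|t|\leq 1$, which is exactly what your ``$\sup_{|s|\leq t_0}\E f^2 e^{sf}$'' controls). So that half is essentially the same argument.

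For the lower bound the paper takes a shorter and sharper route than yours. From $\int e^{tf}\,d\mu\leq e^{t^2/2}$ it passes to the tail estimate $\mu\{|f|\geq s\}\leq 2e^{-s^2/2}$ and then computes $\int e^{\ep f^2}\,d\mu$ directly by integration by parts, obtaining $\int e^{\ep f^2}\,d\mu\leq 1+\frac{2\ep}{1/2-\ep}$, which equals $2$ at $\ep=1/6$; hence $\|f\|_{\psi_2}\leq\sqrt{6}$. No detour through moments or Lemma~\ref{l.2.1} is needed.

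Your route has a real gap in the key step. ``Comparing Taylor coefficients term by term'' of $\E e^{tf}$ and $e^{\sigma_f^2 t^2/2}$ is not a valid deduction: a pointwise inequality between two power series with nonnegative coefficients does \emph{not} imply the coefficientwise inequality $\E f^{2k}\leq (2k)!\,(\sigma_f^2/2)^k/k!$. You can recover moment bounds of the right order by first passing to the tail estimate (as the paper does) and then integrating, but at that point you might as well integrate $e^{\ep f^2}$ directly. Moreover, even with correct moment bounds in hand, funnelling them through the factor $4$ on the right of \eqref{2.1} is too lossy to reach the stated constant: already for a standard Gaussian one has $\|f\|_2/\sqrt{2}=1/\sqrt{2}>\sqrt{6}/4$, so $4\sup_p\|f\|_p/\sqrt{p}$ cannot be $\leq\sqrt{6}\,\sigma_f$. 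The paper's direct tail-to-$\psi_2$ computation is what makes the constant come out cleanly.
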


One can now relate the subgaussian constant of the restricted measure to the
subgaussian constant of the original measure. Let now $(M,d,\mu)$ be a metric
probability space. First, Lemma \ref{l.4.1} immediately yields an equivalent
description in terms of $\psi_2$-norms, namely
\be \label{4.1}
\frac{1}{\sqrt{6}}\, \sup_f \|f\|_{\psi_2}^2 \leq 
\sigma^2(\mu) \leq 4\, \sup_f \|f\|_{\psi_2}^2,
\en
where the supremum is running over all $f:M \rightarrow \R$ with $\mu$-mean zero
and $\|f\|_{\rm Lip} \leq 1$.
Here, one can get rid of the mean zero assumption by considering functions of the form 
$f(x) - f(y)$ on the product space $(M \times M, \mu \otimes \mu)$.
If $f$ has mean zero, then, by Jensen's inequality,
\[
\int\!\!\int e^{(f(x) - f(y))^2/r^2}\,d\mu(x)\,d\mu(y) \geq 
\int e^{f(x)^2/r^2}\,d\mu(x),
\]
which implies that
\[
\|f(x) - f(y)\|_{L^{\psi_2}(\mu \otimes \mu)} \geq \|f\|_{L^{\psi_2}(\mu)}.
\]
On the other hand, by the triangle inequality,
\[
\|f(x) - f(y)\|_{L^{\psi_2}(\mu \otimes \mu)} \leq 2\,\|f\|_{L^{\psi_2}(\mu)}.
\]
Hence, we arrive at another, more flexible relation, where the mean zero assumption 
may be removed.

\begin{lem}\label{l.4.2}
We have
\[
\frac{1}{4\sqrt{6}}\, \sup_{f \in \cal F} \|f(x) - f(y)\|_{L^{\psi_2}(\mu \otimes \mu)}^2 
\leq \sigma^2(\mu) \leq 
4\, \sup_{f \in \cal F} \|f(x) - f(y)\|_{L^{\psi_2}(\mu \otimes \mu)}^2,
\]
where the supremum is running over all functions $f$ on $M$ with $\|f\|_{\rm Lip} \leq 1$.
\end{lem}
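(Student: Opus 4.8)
The plan is to deduce Lemma~\ref{l.4.2} directly from the chain of equivalences already assembled, rather than re-proving anything from scratch. The statement to be proved is a two-sided comparison between $\sigma^2(\mu)$ and the squared $\psi_2$-norm of the ``symmetrized'' function $\tilde f(x,y) = f(x) - f(y)$ on the product space, with the supremum over \emph{all} $1$-Lipschitz $f$ (no mean-zero restriction). The key point is that the relation \eqref{4.1}, which already captures $\sigma^2(\mu)$ up to absolute constants by $\sup_f \|f\|_{\psi_2}^2$ over mean-zero $1$-Lipschitz $f$, can be transferred to the product-space quantity using only the two elementary inequalities displayed just before the statement:
\[
\|f\|_{L^{\psi_2}(\mu)} \le \|f(x)-f(y)\|_{L^{\psi_2}(\mu\otimes\mu)} \le 2\,\|f\|_{L^{\psi_2}(\mu)},
\]
the left one valid when $f$ has mean zero (via Jensen), the right one valid for \emph{any} $f$ (via the triangle inequality for the Orlicz norm).

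First I would prove the upper bound $\sigma^2(\mu) \le 4\,\sup_{\|f\|_{\rm Lip}\le 1}\|f(x)-f(y)\|_{\psi_2}^2$. Take any $1$-Lipschitz $f$ with $\mu$-mean zero; by the right inequality in \eqref{4.1}, $\sigma^2(\mu) \le 4\sup_f\|f\|_{\psi_2}^2$ over such $f$, and the left inequality above gives $\|f\|_{\psi_2} \le \|f(x)-f(y)\|_{L^{\psi_2}(\mu\otimes\mu)}$. Since enlarging the supremum from mean-zero $1$-Lipschitz functions to all $1$-Lipschitz functions only increases it, we obtain $\sigma^2(\mu)\le 4\sup_{f\in\mathcal F}\|f(x)-f(y)\|_{\psi_2}^2$ with $\mathcal F$ the class of $1$-Lipschitz functions. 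Second, for the lower bound, take any $1$-Lipschitz $f$ (not necessarily mean zero) and let $g = f - \int f\,d\mu$, which is again $1$-Lipschitz, has mean zero, and satisfies $g(x)-g(y) = f(x)-f(y)$. The right inequality above applied to $g$ gives $\|f(x)-f(y)\|_{\psi_2} = \|g(x)-g(y)\|_{\psi_2} \le 2\|g\|_{\psi_2}$, and the left inequality in \eqref{4.1} gives $\|g\|_{\psi_2}^2 \le \sqrt 6\,\sigma^2(\mu)$. Combining, $\|f(x)-f(y)\|_{\psi_2}^2 \le 4\sqrt 6\,\sigma^2(\mu)$ for every $1$-Lipschitz $f$, which rearranges to $\frac{1}{4\sqrt 6}\sup_{f\in\mathcal F}\|f(x)-f(y)\|_{\psi_2}^2 \le \sigma^2(\mu)$.

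There is essentially no obstacle here: the lemma is a bookkeeping consequence of \eqref{4.1} and the two norm comparisons, both of which are already justified in the text immediately preceding the statement. The only mild subtlety worth stating explicitly is that when removing the mean-zero constraint one must check that subtracting the mean preserves the Lipschitz bound (it does, trivially) and that the increment function $f(x)-f(y)$ is insensitive to additive constants, so the supremum over $\mathcal F$ is genuinely the same object whether or not one restricts to mean-zero representatives. I would present the argument in the two short displays above and remark that the constants are not optimized.
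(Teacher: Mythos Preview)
Your proposal is correct and follows exactly the same route as the paper: the lemma is obtained by combining the two-sided estimate \eqref{4.1} with the two norm comparisons $\|f\|_{\psi_2}\le\|f(x)-f(y)\|_{L^{\psi_2}(\mu\otimes\mu)}\le 2\|f\|_{\psi_2}$ (Jensen for the left, triangle inequality for the right), and then using translation invariance of $f(x)-f(y)$ to drop the mean-zero restriction. Your write-up is in fact a more explicit rendering of what the paper leaves as the short discussion immediately preceding the lemma.
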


\begin{proof}[Proof of Theorem \ref{t.1.1}.]
We are prepared to make last steps for the proof of the inequality \eqref{1.4}. 
We use the well-known Kirszbraun's theorem: Any function $f: A \rightarrow \R$ with 
Lipschitz semi-norm $\|f\|_{\rm Lip} \leq 1$ on $A$ admits a Lipschitz extension to 
the whole space (\cite{K}, \cite{MS}). Namely, one may put
\[
\tilde f(x) = \inf_{a \in  A} \big[f(a) + d(a,x)\big], \quad x \in M.
\]
Applying first Corollary \ref{c.3.2} and then the left inequality of Lemma \ref{l.4.2} to $\tilde f$, 
we get
\bee
\|f(x) - f(y)\|_{L^{\psi_2}(\mu_A \otimes \mu_A)}^2 
 & = & 
\big\|\tilde f(x) - \tilde f(y)\big\|_{L^{\psi_2}(\mu_A \otimes \mu_A)}^2 \\
 & \leq & 
\big(4e\sqrt{2}\,\big)^2\,\log\Big(\frac{e}{\mu(A)}\Big)\, 
\big\|\tilde f(x) - \tilde f(y)\big\|_{L^{\psi_2}(\mu \otimes \mu)}^2 \\ 
 & \leq & 
\big(4e\sqrt{2}\,\big)^2\,\log\Big(\frac{e}{\mu(A)}\Big)\cdot 
\big(4\sqrt{6}\,\big)^2\,\sigma^2(\mu).
\ene
Another application of Lemma \ref{l.4.2} -- in the space $(A,d,\mu_A)$ (now the right inequality) 
yields
\[
\sigma^2(\mu_A) \, \leq \, 4 \cdot 
\big(4e\sqrt{2}\,\big)^2\,\log\Big(\frac{e}{\mu(A)}\Big)\cdot 
\big(4\sqrt{6}\,\big)^2\,\sigma^2(\mu).
\]
This is exactly \eqref{1.4} with constant
$c = 4 \cdot (4e\sqrt{2}\,)^2\,(4\sqrt{6}\,)^2 = 3 \cdot 2^{12} e^2 = 90796.72...$
\end{proof}

\begin{rem}\label{r.4.3}
Let us also record the following natural generalization of Theorem
\ref{t.1.1}, which is obtained along the same arguments.
Given a collection $\cal F$ of (integrable) functions on the probability space 
$(M,\mu)$, define $\sigma_{\cal F}^2(\mu)$ as the infimum over all $\sigma^2$ such that
\[
\int e^{t(f-m)}\,d\mu \leq e^{\sigma^2 t^2/2}, \quad {\rm for \ all} \ \ t \in \R,
\]
for any $f \in \cal F$, where $m = \int f\,d\mu$. Then with the same constant $c$ as in
Theorem \ref{t.1.1}, for any measurable $A \subset M$, $\mu(A) > 0$, we have
\[
\sigma_{{\cal F}_A}^2(\mu_A) \, \leq \, 
c\,\log\Big(\frac{e}{\mu(A)}\Big)\,\sigma_{\cal F}^2(\mu),
\]
where ${\cal F}_A$ denotes the collection of restrictions of functions $f$ from $\cal F$
to the set $A$.
\end{rem}

Let us now mention an interesting connection of the subgaussian constants with the
Kantorovich distances
\[
W_1(\mu,\nu) = \inf \int\!\!\!\int d(x,y)\,\pi(x,y)
\]
and the relative entropies
\[
D(\nu||\mu) = \int \log\frac{d\nu}{d\mu}\,d\nu
\]
(called also Kullback-Leibler's distances or informational divergences).
Here, $\nu$ is a probability measure on $M$, which is absolutely continuous with 
respect to $\mu$ (for short, $\nu <\!< \mu$), and the infimum in the definition of 
$W_1$ is running over all probability measures $\pi$ on the product space $M \times M$ 
with marginal distributions $\mu$ and $\nu$, i.e., such that
\[
\pi(B \times M) = \mu(B), \quad \pi(M \times B) = \nu(B) \qquad 
({\rm Borel} \ B \subset M).
\]

As was shown in \cite{B-G}, if $(M,d)$ is a Polish space (complete separable), the subgaussian
constant $\sigma^2 = \sigma^2(\mu)$ may be described as an optimal value in the 
transport-entropy inequality
\be \label{4.2}
W_1(\mu,\nu) \leq \sqrt{2\sigma^2 D(\nu||\mu)}.
\en
Hence, we obtain from the inequality \eqref{1.4} a similar relation for measures $\nu$
supported on given subsets of $M$.

\begin{cor}\label{c.4.4}
Given a Borel probability measure $\mu$ on a Polish space 
$(M,d)$ and a closed set $A$ in $M$ such that $\mu(A) > 0$, for any Borel probability 
measure $\nu$ supported on $A$,
\[
W_1^2(\mu_A,\nu) \leq c\sigma^2(\mu) \log\Big(\frac{e}{\mu(A)}\Big)\,D(\nu||\mu_A),
\]
where $c$ is an absolute constant.
\end{cor}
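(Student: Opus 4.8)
The plan is to apply, on the restricted space $(A,d,\mu_A)$, the transport-entropy description of the subgaussian constant recalled in \eqref{4.2}, and then to substitute the bound of Theorem \ref{t.1.1}.

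First I would note that since $A$ is closed in the Polish space $(M,d)$, the metric space $(A,d)$ is itself Polish, and $\mu_A$ is a Borel probability measure on it. Any Borel probability measure $\nu$ supported on $A$ is a Borel measure on $A$, and every coupling of $\mu_A$ and $\nu$ is automatically concentrated on $A\times A$, because both marginals vanish off $A$; hence $W_1(\mu_A,\nu)$ has the same value whether the infimum is taken over couplings on $M\times M$ or on $A\times A$. Likewise $D(\nu\|\mu_A)$ does not depend on the ambient space. Therefore the characterization \eqref{4.2}, applied in $(A,d,\mu_A)$, yields
\[
W_1(\mu_A,\nu) \leq \sqrt{2\,\sigma^2(\mu_A)\,D(\nu\|\mu_A)}
\]
for every Borel probability measure $\nu$ on $A$; when $\nu$ is not absolutely continuous with respect to $\mu_A$ the right-hand side is $+\infty$ and there is nothing to prove.

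Squaring and inserting the bound $\sigma^2(\mu_A)\leq c\,\log(e/\mu(A))\,\sigma^2(\mu)$ from Theorem \ref{t.1.1} gives
\[
W_1^2(\mu_A,\nu) \leq 2\,\sigma^2(\mu_A)\,D(\nu\|\mu_A) \leq 2c\,\log\Big(\frac{e}{\mu(A)}\Big)\,\sigma^2(\mu)\,D(\nu\|\mu_A),
\]
which is the assertion, with $2c$ in place of the absolute constant $c$; the degenerate case $\sigma^2(\mu)=\infty$ is trivial. I expect no genuine obstacle here: the only points needing care are that the hypotheses of the result of \cite{B-G} do hold for the restricted space — which is exactly why $A$ is assumed closed, ensuring completeness and separability of $(A,d)$ — and that neither $W_1$ nor the relative entropy changes when one passes between $M$ and $A$. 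Once these bookkeeping verifications are in place, the conclusion is a one-line substitution.
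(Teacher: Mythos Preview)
Your argument is correct and is essentially the paper's own: invoke the transport-entropy characterization \eqref{4.2} of the subgaussian constant on the restricted Polish space $(A,d,\mu_A)$ and then plug in the bound \eqref{1.4} from Theorem \ref{t.1.1}. The paper states this in one line just before the corollary; your version merely spells out the bookkeeping (closedness of $A$ guaranteeing that $(A,d)$ is Polish, and invariance of $W_1$ and $D$ under the passage from $M$ to $A$), which is harmless and accurate.
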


This assertion is actually equivalent to Theorem \ref{t.1.1}.
Note that, for $\nu$ supported on $A$, there is an identity 
$D(\nu||\mu_A) = \log \mu(A) + D(\nu||\mu)$. In particular, 
$D(\nu||\mu_A) \leq D(\nu||\mu)$, so the relative entropies decrease when turning 
to restricted measures.

\section{Proof of Theorem \ref{t.1.3}. Spectral gap}
\setcounter{equation}{0}

Theorem \ref{t.1.1} insures, in particular, that, for any function $f$ on the metric
probability space $(M,d,\mu)$ with Lipschitz semi-norm $\|f\|_{\rm Lip} \leq 1$,
\[
\Var_{\mu_A}(f) \leq c\,\log\bigg(\frac{e}{\mu(A)}\bigg)\,\sigma^2(\mu)
\]
up to some absolute constant $c$. In fact, in order to reach a similar concentration 
property of the restricted measures, it is enough to start with a Poincar\'e-type
inequality on $M$,
\[
\lambda_1 \Var_\mu(f) \leq \int |\nabla f|^2\,d\mu.
\]
Under this hypothesis, a well-known theorem due to Gromov-Milman and Borovkov-Utev
asserts that mean zero Lipschitz functions $f$ have bounded 
$\psi_1$-norm. One may use a variant of this theorem proposed by Aida and Strook \cite{A-S},
who showed that
\[
\int e^{\sqrt{\lambda_1}\, f}\,d\mu \leq K_0 = 1.720102... \qquad (\|f\|_{\rm Lip} \leq 1).
\]
Hence
\[
\int e^{\sqrt{\lambda_1}\, |f|}\,d\mu \leq 2K_0 \quad {\rm and} \quad
\int e^{\frac{1}{2}\sqrt{\lambda_1}\, |f|}\,d\mu \leq \sqrt{2K_0} < 2,
\]
thus implying that $\|f\|_{\psi_1} \leq \frac{2}{\sqrt{\lambda_1}}$.
In addition,
\[
\int e^{\sqrt{\lambda_1}\, (f(x)-f(y))}\,d\mu(x) d\mu(y) \leq K_0^2, \qquad
\int e^{\sqrt{\lambda_1}\, |f(x)-f(y)|}\,d\mu(x) d\mu(y) \leq 2 K_0^2 < 6.
\]
From this,
\[
\int e^{\frac{1}{3}\sqrt{\lambda_1}\, |f(x)-f(y)|}\,d\mu(x) d\mu(y) < 6^{1/3} < 2,
\]
which means that $\|f(x) - f(y)\|_{\psi_1} \leq \frac{3}{\sqrt{\lambda_1}}$
with respect to the product measure $\mu \otimes \mu$ on the product space
$M \times M$. This inequality is translation invariant, so 
the mean zero assumption may be removed. Thus, we arrive at:

\begin{lem}\label{l.5.1}
Under the Poincar\'e-type inequality with spectral gap $\lambda_1>0$,
for any mean zero function $f$ on $(M,d,\mu)$ with $\|f\|_{\rm Lip} \leq 1$,
\[
\|f\|_{\psi_1} \leq \frac{2}{\sqrt{\lambda_1}}.
\]
Moreover, for any $f$ with $\|f\|_{\rm Lip} \leq 1$,
\be \label{5.1}
\|f(x)-f(y)\|_{L^{\psi_1}(\mu \otimes \mu)} \leq \frac{3}{\sqrt{\lambda_1}}.
\en
\end{lem}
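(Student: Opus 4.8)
The plan is to derive both bounds from a single external input: the exponential integrability of Lipschitz functions under a Poincar\'e inequality, in the sharp form of Aida and Stroock, namely $\int e^{\sqrt{\lambda_1}\,f}\,d\mu \le K_0$ (with $K_0 = 1.720102\ldots$) whenever $\|f\|_{\rm Lip}\le 1$ and $\int f\,d\mu = 0$. Granting this, everything else is elementary manipulation of Orlicz norms via Jensen's (equivalently H\"older's) inequality together with an independence argument on the product space.

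For the mean-zero bound I would apply the Aida--Stroock estimate to both $f$ and $-f$ (each mean zero and $1$-Lipschitz) and add, obtaining $\int e^{\sqrt{\lambda_1}\,|f|}\,d\mu \le \int e^{\sqrt{\lambda_1}\,f}\,d\mu + \int e^{-\sqrt{\lambda_1}\,f}\,d\mu \le 2K_0$. Since $2K_0 < 4$, applying Jensen's inequality to the concave map $t\mapsto t^{1/2}$ gives $\int e^{\frac12\sqrt{\lambda_1}\,|f|}\,d\mu \le (2K_0)^{1/2} < 2$, which by the definition of the Orlicz norm is precisely $\|f\|_{\psi_1} \le 2/\sqrt{\lambda_1}$.

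For the product-space bound I would first reduce to the mean-zero case: subtracting $\int f\,d\mu$ from $f$ does not change the difference $f(x)-f(y)$, so we may assume $\int f\,d\mu = 0$. Then, by Fubini and independence of the two coordinates under $\mu\otimes\mu$, $\int\!\!\int e^{\sqrt{\lambda_1}(f(x)-f(y))}\,d\mu(x)\,d\mu(y) = \big(\int e^{\sqrt{\lambda_1}f}\,d\mu\big)\big(\int e^{-\sqrt{\lambda_1}f}\,d\mu\big) \le K_0^2$, and the same bound holds with the exponent negated; hence $\int\!\!\int e^{\sqrt{\lambda_1}|f(x)-f(y)|}\,d\mu\,d\mu \le 2K_0^2 < 6$. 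A second application of Jensen, now with $t\mapsto t^{1/3}$, yields $\int\!\!\int e^{\frac13\sqrt{\lambda_1}|f(x)-f(y)|}\,d\mu\,d\mu \le 6^{1/3} < 2$, i.e.\ $\|f(x)-f(y)\|_{L^{\psi_1}(\mu\otimes\mu)} \le 3/\sqrt{\lambda_1}$, which is \eqref{5.1}.

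The only real point of care is bookkeeping with the numerical constant: the clean final constants $2$ and $3$ depend on the quantitative inequalities $2K_0 < 4$ and $2K_0^2 < 6$, which is exactly why I would invoke the sharp Aida--Stroock value of $K_0$ rather than the softer Gromov--Milman / Borovkov--Utev integrability (which would still close the argument, but with larger constants). Every other step --- passing from $f$ to $|f|$, rescaling the exponent, and the reduction to mean zero on the product space --- is routine.
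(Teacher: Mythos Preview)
Your proposal is correct and matches the paper's own argument essentially step for step: both start from the Aida--Stroock bound $\int e^{\sqrt{\lambda_1}f}\,d\mu\le K_0$, pass to $|f|$ by adding the estimates for $\pm f$, and then use Jensen with exponents $1/2$ and $1/3$ (relying on $2K_0<4$ and $2K_0^2<6$) to reach the thresholds defining the $\psi_1$-norms. The only cosmetic difference is that you reduce to mean zero at the outset of the product-space argument, whereas the paper notes translation invariance at the end.
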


This is a version of the concentration of measure phenomenon (with exponential 
integrability) in presence of a Poincar\'e-type inequality. Our goal is therefore
to extend this property to the normalized restricted measures $\mu_A$.
This can be achieved by virtue of the inequality \eqref{3.2} which when combined with
\eqref{5.1} yields an upper bound
\[
\|f(x) - f(y)\|_{L^{\psi_1}(\mu_A \otimes \mu_A)} \, \leq \, 
36\,e\,\log\bigg(\frac{e}{\mu(A)}\bigg)\, \frac{1}{\sqrt{\lambda_1}}.
\]
Moreover, if $f$ has $\mu_A$-mean zero, the left norm dominates 
$\|f\|_{L^{\psi_1}(\mu_A)}$ (by Jensen's inequality). We can summarize, taking into account once again Kirszbraun's theorem, as we did
in the proof of Theorem \ref{t.1.1}.

\begin{prop}\label{p.5.2}
Assume the metric probability space $(M,d,\mu)$ satisfies
a Poincar\'e-type inequality with constant $\lambda_1>0$. Given a measurable set 
$A \subset M$ with $\mu(A) > 0$, for any function $f:A \rightarrow \R$
with $\mu_A$-mean zero and such that $\|f\|_{\rm Lip} \leq 1$ on $A$,
\[
\|f\|_{L^{\psi_1}(\mu_A)} \, \leq \, 
36\,e\,\log\Big(\frac{e}{\mu(A)}\Big)\, \frac{1}{\sqrt{\lambda_1}}.
\]
\end{prop}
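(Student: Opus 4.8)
The plan is to follow the template of the proof of Theorem~\ref{t.1.1}, combining the Poincar\'e-based estimate of Lemma~\ref{l.5.1} with the restricted-measure inequality \eqref{3.2} and a Jensen-type symmetrization argument. First I would use Kirszbraun's theorem to replace the function $f$, which is only given on $A$, by a genuine $1$-Lipschitz function on all of $M$: setting $\tilde f(x) = \inf_{a\in A}[\,f(a) + d(a,x)\,]$ produces an extension with $\tilde f|_A = f$ and $\|\tilde f\|_{\rm Lip}\le 1$. Since the measure $\mu_A$ (and hence $\mu_A\otimes\mu_A$) is concentrated on $A$ (resp.\ $A\times A$), passing from $f$ to $\tilde f$ leaves all the relevant $L^{\psi_1}$-norms over $\mu_A$ unchanged, while it makes the whole-space hypotheses available.

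Next I would apply the concentration estimate \eqref{5.1} of Lemma~\ref{l.5.1} to $\tilde f$, obtaining $\|\tilde f(x)-\tilde f(y)\|_{L^{\psi_1}(\mu\otimes\mu)}\le 3/\sqrt{\lambda_1}$, and then plug this into \eqref{3.2} (applied to $\tilde f$ in place of $f$). This gives
\[
\|\tilde f(x)-\tilde f(y)\|_{L^{\psi_1}(\mu_A\otimes\mu_A)} \,\le\, 12\,e\,\log\Big(\frac{e}{\mu(A)}\Big)\cdot\frac{3}{\sqrt{\lambda_1}} \,=\, 36\,e\,\log\Big(\frac{e}{\mu(A)}\Big)\,\frac{1}{\sqrt{\lambda_1}}.
\]

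It remains to dominate $\|f\|_{L^{\psi_1}(\mu_A)} = \|\tilde f\|_{L^{\psi_1}(\mu_A)}$ by the symmetrized norm on the left above, which is where the hypothesis $\int f\,d\mu_A = 0$ enters. For any $r>0$ and fixed $x$, the function $t\mapsto \exp(|\tilde f(x)-t|/r)$ is convex, so integrating in $y$ against $\mu_A$ and using $\int \tilde f\,d\mu_A = 0$ yields $\int e^{|\tilde f(x)-\tilde f(y)|/r}\,d\mu_A(y) \ge e^{|\tilde f(x)|/r}$; integrating in $x$ then shows $\|\tilde f\|_{L^{\psi_1}(\mu_A)} \le \|\tilde f(x)-\tilde f(y)\|_{L^{\psi_1}(\mu_A\otimes\mu_A)}$. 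Combining with the previous display gives exactly the asserted bound $36\,e\,\log(e/\mu(A))/\sqrt{\lambda_1}$.

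I do not expect a genuine obstacle here, since the substantive work is already contained in Lemma~\ref{l.5.1} and in the derivation of \eqref{3.2}; the only points requiring a little care are the bookkeeping ones: that the inf-convolution formula defines a valid $1$-Lipschitz extension without any completeness assumption on $M$, that $\mu_A\otimes\mu_A$ is literally the normalized restriction of $\mu\otimes\mu$ to $A\times A$ so that \eqref{3.2} applies verbatim, and that the Jensen step is legitimate for the Young function $\psi_1$ (i.e.\ convexity of $t\mapsto e^{|c-t|/r}$ in $t$). The numerical constant $36\,e = 3\cdot 12\,e$ then comes out with no further optimization.
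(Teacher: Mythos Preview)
Your proposal is correct and follows essentially the same route as the paper: extend $f$ to a $1$-Lipschitz $\tilde f$ on $M$ via the inf-convolution formula, feed Lemma~\ref{l.5.1}'s bound \eqref{5.1} into the restricted-measure inequality \eqref{3.2}, and then use Jensen's inequality (via the mean-zero hypothesis) to pass from the symmetrized $\psi_1$-norm to $\|f\|_{L^{\psi_1}(\mu_A)}$. The paper's proof is terser but the ingredients, their order, and the constant $36e=12e\cdot 3$ are exactly as you describe.
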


Theorem \ref{t.1.3} is now easily obtained with constant $c = 2\,(36e)^2$ by noting that 
$L^2$-norms are dominated by $L^{\psi_1}$-norms. More precisely, since 
$e^{|t|} - 1 \geq \frac{1}{2}\,t^2$, one has $\|f\|_{\psi_1}^2 \geq \frac{1}{2}\,\|f\|_2^2$.

\section{Examples}\label{examples}
\setcounter{equation}{0}

Theorems \ref{t.1.1} and \ref{t.1.3} involve a lot of interesting examples. Here are a few obvious cases.

\vskip3mm
\noindent  {\bf 1)} The standard Gaussian measure $\mu = \gamma$ on $\R^n$ satisfies a logarithmic 
Sobolev inequality on $M = \R^n$ with a dimension-free constant $\rho = 1$. 
Hence, from Theorem \ref{t.1.1} we get:

\begin{cor}\label{c.6.1}
For any measurable set $A \subset \R^n$ with $\gamma(A) > 0$,
the subgaussian constant $\sigma^2(\gamma_A)$ of the normalized restricted measure
$\gamma_A$ satisfies
\[
\sigma^2(\gamma_A) \, \leq \, c\,\log\Big(\frac{e}{\gamma(A)}\Big),
\]
where $c$ is an absolute constant. 
\end{cor}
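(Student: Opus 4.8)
The plan is to obtain Corollary \ref{c.6.1} as an immediate consequence of Theorem \ref{t.1.1}, once we record that the subgaussian constant of $\gamma$ itself is bounded by an absolute constant independently of the dimension. So the only real input beyond Theorem \ref{t.1.1} is a dimension-free bound on $\sigma^2(\gamma)$.

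First I would invoke Gross's logarithmic Sobolev inequality for the standard Gaussian measure $\gamma$ on $\R^n$ equipped with the Euclidean distance: for every locally Lipschitz $u$,
\[
\Ent_\gamma(u^2) \leq 2\int |\nabla u|^2\,d\gamma,
\]
that is, the logarithmic Sobolev constant is $\rho = 1$, uniformly in $n$. By the right-hand bound in \eqref{1.3} this yields $\sigma^2(\gamma) \leq 1/\rho = 1$. (Alternatively, one may bypass the log-Sobolev inequality and verify directly that $\sigma^2(\gamma) = 1$ via the classical Gaussian concentration bound $\int e^{tf}\,d\gamma \leq e^{t^2/2}$ for mean-zero $1$-Lipschitz $f$; either route gives the same absolute bound on $\sigma^2(\gamma)$.)

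Next I would apply Theorem \ref{t.1.1} with $M = \R^n$, $d$ the Euclidean metric, and $\mu = \gamma$: for any measurable $A \subset \R^n$ with $\gamma(A) > 0$,
\[
\sigma^2(\gamma_A) \leq c\,\log\Big(\frac{e}{\gamma(A)}\Big)\,\sigma^2(\gamma) \leq c\,\log\Big(\frac{e}{\gamma(A)}\Big),
\]
with the same absolute constant $c$ as in Theorem \ref{t.1.1}, which proves the corollary.

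There is essentially no obstacle to overcome here, since all the work is already done in Theorem \ref{t.1.1}; the only point deserving emphasis is that the argument genuinely needs the \emph{dimension-free} value $\rho = 1$ (equivalently, a dimension-free bound on $\sigma^2(\gamma)$), as this is exactly what makes the final estimate independent of $n$. By contrast, using only the Poincaré constant $\lambda_1 = 1$ of $\gamma$ together with Theorem \ref{t.1.3} would control the spread constant $s^2(\gamma_A)$ but not the subgaussian constant $\sigma^2(\gamma_A)$, and with an extra logarithmic factor.
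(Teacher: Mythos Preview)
Your proof is correct and follows exactly the paper's approach: the corollary is deduced from Theorem \ref{t.1.1} by noting that the standard Gaussian measure satisfies the logarithmic Sobolev inequality with $\rho = 1$, so that $\sigma^2(\gamma) \leq 1$ via \eqref{1.3}. The additional remarks you make (the alternative direct verification of $\sigma^2(\gamma)=1$ and the comparison with Theorem \ref{t.1.3}) are accurate but not needed for the argument.
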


As it was already mentioned, if $A$ is convex, there is a sharper bound
$\sigma^2(\gamma_A) \leq 1$. However, it may not hold without convexity assumption. 
Neverteless, if $\gamma(A)$ is bounded away from zero, we obtain a more universal 
principle.

Clearly, Corollary \ref{c.6.1} extends to all product measures $\mu = \nu^n$ on $\R^n$
such that $\nu$ satisfies a logarithmic Sobolev inequality on the real line, and with 
constants $c$ depending on $\rho$, only. A characterization of the property $\rho > 0$
in terms of the distribution function of the measure $\nu$ and the density of its 
absolutely continuous component may be found in \cite{B-G}.

\vskip3mm
\noindent {\bf 2)} Consider a uniform distribution $\nu$ on the shell
\[
A_\ep = \big\{x \in \R^n: 1 - \ep \leq |x| \leq 1\big\}, \qquad 0 \leq \ep \leq 1 \ \ 
(n \geq 2).
\]

\begin{cor}\label{c.6.2}
The subgaussian constant of $\nu$ satisfies\,
$\sigma^2(\nu) \leq \frac{c}{n}$, up to some absolute constant $c$.
\end{cor}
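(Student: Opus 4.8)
The plan is to split the argument according to the thickness $\ep$ of the shell, meeting in the middle around $\ep\asymp 1/n$: for thick shells I will realize $\nu$ as a restriction of the uniform measure on the ball and invoke Theorem \ref{t.1.1}, while for thin shells I will use the polar factorization of $\nu$ and a tensorization of the subgaussian bound. A preliminary fact I will need is that the uniform (Lebesgue) probability measure $\beta_n$ on the Euclidean unit ball $\{|x|\le 1\}\subset\R^n$ satisfies $\sigma^2(\beta_n)\le\frac{1}{n+1}$. This follows from the classical Archimedes-type identity: the orthogonal projection $\R^{n+2}\to\R^n$ onto the first $n$ coordinates pushes the uniform distribution of the sphere $S^{n+1}\subset\R^{n+2}$ forward exactly to $\beta_n$ (projecting the uniform law of $S^{k+1}\subset\R^{k+2}$ to $\R^k$ yields the uniform law on $\{|y|\le1\}\subset\R^k$). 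Since this projection is $1$-Lipschitz for the Euclidean metrics and the subgaussian constant does not increase under $1$-Lipschitz pushforwards, while the uniform measure on $S^{n+1}$ has $\sigma^2=\frac{1}{n+1}$ (with respect to the chordal metric, which is dominated by the geodesic one), we get $\sigma^2(\beta_n)\le\frac{1}{n+1}$.

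For thick shells, say $\ep\ge 1/n$, I note that $A_\ep\subset\{|x|\le1\}$ and that $\nu$ is precisely the normalized restriction $(\beta_n)_{A_\ep}$, whose relative mass is $\beta_n(A_\ep)=1-(1-\ep)^n\ge 1-(1-1/n)^n\ge 1-e^{-1}>\tfrac12$. Then Theorem \ref{t.1.1} gives $\sigma^2(\nu)\le c\,\log(2e)\,\sigma^2(\beta_n)\le\frac{2c}{n+1}$, which is of the required form.

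For thin shells $0\le\ep\le 1/n$, I will use the polar factorization $\nu=\s\otimes\rho_\ep$: writing $x=r\theta$ with $\theta\in\S$, $r\in[1-\ep,1]$, the measure $\nu$ is the image of $\s\otimes\rho_\ep$ under $(\theta,r)\mapsto r\theta$, where $\s$ is the normalized surface measure on $\S$ and $\rho_\ep$ is the probability measure on $[1-\ep,1]$ with density proportional to $r^{n-1}$. From $|r\theta-r'\theta'|\le r|\theta-\theta'|+|r-r'|\le|\theta-\theta'|+|r-r'|$ (valid for $r,r'\le1$), this map is $1$-Lipschitz from $\S\times[1-\ep,1]$, equipped with the metric $|\theta-\theta'|+|r-r'|$ (chordal on the sphere), onto $A_\ep$; hence $\sigma^2(\nu)$ is at most the subgaussian constant of $\s\otimes\rho_\ep$ on this product space. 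A standard conditioning argument bounds the latter: decomposing a $1$-Lipschitz $F(\theta,r)$ as the sum of $F(\theta,r)-\int F(\cdot,r)\,d\s$ (for each fixed $r$, a mean-zero $1$-Lipschitz function on the sphere) and $\int F(\cdot,r)\,d\s-\iint F\,d\s\,d\rho_\ep$ (a mean-zero $1$-Lipschitz function of $r$), and integrating out $\theta$ first, one gets $\sigma^2(\s\otimes\rho_\ep)\le\sigma^2(\s)+\sigma^2(\rho_\ep)\le\frac{1}{n-1}+\frac{\ep^2}{4}$, where $\sigma^2(\rho_\ep)\le\ep^2/4$ is Hoeffding's lemma for a measure supported on an interval of length $\ep$. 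For $\ep\le1/n$ this is at most $\frac{1}{n-1}+\frac{1}{4n^2}\le\frac{9}{4n}$.

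Putting the two ranges together yields $\sigma^2(\nu)\le C/n$ with an absolute constant $C$. The one step requiring care is the tensorization in the thin-shell case: one must pick the product metric so that both one-dimensional slices of a Lipschitz function remain $1$-Lipschitz, and check that the mean-zero reductions used when applying the subgaussian bounds for $\s$ and $\rho_\ep$ are legitimate (they are, by Fubini and Jensen). Everything else is elementary. One could also avoid the case split by estimating $\sigma^2(\rho_\ep)\le c/n$ directly for every $\ep$ — $\rho_\ep$ is the law of $Y^{1/n}$ with $Y$ uniform on $[(1-\ep)^n,1]$ and is concentrated near $r=1$ at scale $1/n$, so a Laplace-method bound on its moment generating function does the job — but the two-case argument above is shorter.
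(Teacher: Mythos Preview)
Your proof is correct and shares the paper's overall scaffold: split at $\ep\sim 1/n$, handle the thick case by viewing $\nu$ as a restriction of the uniform measure on the ball and applying Theorem~\ref{t.1.1}, and handle the thin case by comparison with the sphere. The details, however, diverge in two places.

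For the bound $\sigma^2(\beta_n)\le c/n$, the paper simply invokes that the log-Sobolev constant of the unit ball is of order~$n$, whereas you obtain it via the Archimedes projection $S^{n+1}\to\R^n$ and monotonicity of~$\sigma^2$ under $1$-Lipschitz pushforwards. Your route is more self-contained.

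For the thin shell, the paper uses a perturbation argument: restrict $f$ to $S^{n-1}$ and note $|f(r\theta)-f(\theta)|\le\ep\le 1/n$, so the spherical subgaussian bound carries over up to a bounded additive error (which, strictly, requires passing through $\psi_2$-norms via Lemma~\ref{l.4.1} to absorb the linear-in-$t$ term). You instead factor $\nu$ as the $1$-Lipschitz image of $\s\otimes\rho_\ep$ on the product with the $\ell^1$-metric and tensorize the subgaussian constant exactly as in the proof of Proposition~\ref{p.6.5}, getting $\sigma^2(\nu)\le \frac{1}{n-1}+\frac{\ep^2}{4}$. This is cleaner and gives an explicit constant, at the cost of a slightly heavier setup. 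Both approaches are valid; yours is the more complete as written.
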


In other words, mean zero Lipschitz functions $f$ on $A_\ep$ are such that
$\sqrt{n}\,f$ are subgaussian. This property is well-known in the extreme cases -- 
on the unit Euclidean ball $A = B_n$ $(\ep = 1)$ and on the unit sphere 
$A = S^{n-1}$ $(\ep = 0)$.

Let $\mu$ denote the normalized Lebesgue measure on $B_n$.
In the case $\ep \geq \frac{1}{n}$, the shell $A_\ep$ represents the part of $B_n$ of 
measure 
\[
\mu(A_\ep) = 1 - \Big(1 - \frac{1}{n}\Big)^{n} \geq 1 - \frac{1}{e}. 
\]
Since the logarithmic Sobolev constant of the unit ball is of order $\frac{1}{n}$, 
and therefore $\sigma^2(\mu) \leq \frac{c}{n}$, the assertion of Corollary \ref{c.6.2} 
immediately follows from Theorem \ref{t.1.1}. If $\ep \leq \frac{1}{n}$, 
the assertion follows from a similar concentration property of the uniform 
distribution on the unit sphere. Indeed, with every Lipschitz function $f$ on $A_\ep$ 
one may associate its restriction to $S^{n-1}$, which is also Lipschitz (with respect 
to the Euclidean distance). On the other hand, for any $r \in [1-\ep,1]$ and 
$\theta \in S^{n-1}$, we have 
$|f(r\theta) - f(\theta)| \leq |r-1| \leq \ep \leq \frac{1}{n}$, thus proving the claim.

\vskip3mm
\noindent
{\bf  3)} The two-sided product exponential measure $\mu$ on $\R^n$ with density 
$2^{-n}\,e^{-(|x_1| + \dots + |x_n|)}$ satisfies a Poincar\'e-type
inequality on $M = \R^n$ with a dimension-free constant $\lambda_1 = 1/4$. Hence, 
from Proposition \ref{p.5.2} we get:

\begin{cor}\label{c.6.3}
For any measurable set $A \subset \R^n$ with $\mu(A) > 0$,
and for any function $f:A \rightarrow \R$ with $\mu_A$-mean zero and 
$\|f\|_{\rm Lip} \leq 1$, we have
\[
\|f\|_{L^{\psi_1}(\mu_A)} \, \leq \, c\,\log\Big(\frac{e}{\mu(A)}\Big),
\]
where $c$ is an absolute constant. In particular,
\[
s^2(\mu_A) \, \leq \, c\,\log^2\Big(\frac{e}{\mu(A)}\Big).
\]
\end{cor}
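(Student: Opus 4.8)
The plan is to derive Corollary \ref{c.6.3} by combining two ingredients already in place: the Poincar\'e-type inequality satisfied by the two-sided product exponential measure $\mu$, and the transfer of $\psi_1$-concentration to restricted measures furnished by Proposition \ref{p.5.2}. First I would recall the classical fact that the one-dimensional symmetric exponential density $\frac{1}{2}e^{-|t|}$ satisfies a Poincar\'e inequality with spectral gap $\frac14$, and that this tensorizes: for a product of $n$ copies one keeps the same constant $\lambda_1 = \frac14$, independently of $n$. Thus $(\R^n, |\cdot|, \mu)$ satisfies the Poincar\'e-type inequality with $\lambda_1 = \frac14$, which is exactly the hypothesis needed to invoke Proposition \ref{p.5.2}.

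Next, applying Proposition \ref{p.5.2} directly to any $f : A \to \R$ with $\mu_A$-mean zero and $\|f\|_{\rm Lip} \leq 1$ on $A$, I obtain
\[
\|f\|_{L^{\psi_1}(\mu_A)} \, \leq \, 36\,e\,\log\Big(\frac{e}{\mu(A)}\Big)\, \frac{1}{\sqrt{\lambda_1}}
\, = \, 72\,e\,\log\Big(\frac{e}{\mu(A)}\Big),
\]
which is the first claimed bound with, say, $c = 72e$. For the second claim, I would then bound the variance by the squared $\psi_1$-norm: since $e^{|t|} - 1 \geq \frac12 t^2$, one has $\|f\|_{\psi_1}^2 \geq \frac12 \|f\|_2^2$, hence $\Var_{\mu_A}(f) = \|f\|_{L^2(\mu_A)}^2 \leq 2\,\|f\|_{L^{\psi_1}(\mu_A)}^2$ for mean-zero $f$. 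Taking the supremum over all such $f$ with $\|f\|_{\rm Lip} \leq 1$ on $A$ gives $s^2(\mu_A) \leq 2\,(72e)^2 \log^2(e/\mu(A))$, which is the asserted $s^2(\mu_A) \leq c \log^2(e/\mu(A))$.

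As a practical matter, the only point requiring a word of care is that $f$ is a priori defined only on $A$; but this is handled exactly as in the proof of Proposition \ref{p.5.2} and Theorem \ref{t.1.1}, via Kirszbraun's extension $\tilde f(x) = \inf_{a \in A}[f(a) + d(a,x)]$, which preserves the Lipschitz semi-norm, together with the passage to $\tilde f(x) - \tilde f(y)$ on the product space and inequality \eqref{3.2} — but all of this is already absorbed into the statement of Proposition \ref{p.5.2}, so in fact nothing new is needed here. I do not anticipate any genuine obstacle: the corollary is a direct specialization, and the main (minor) step is simply recording the dimension-free Poincar\'e constant $\lambda_1 = \frac14$ for the product exponential measure and plugging it in.
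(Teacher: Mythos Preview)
Your proposal is correct and follows exactly the paper's approach: the paper simply notes that the product exponential measure has dimension-free Poincar\'e constant $\lambda_1 = \frac14$ and invokes Proposition \ref{p.5.2}, with the passage from the $\psi_1$-bound to the spread constant handled via $e^{|t|}-1 \geq \frac12 t^2$ (as in the proof of Theorem \ref{t.1.3}). There is nothing to add.
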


Clearly, Corollary 6.3 extends to all product measures $\mu = \nu^n$ on $\R^n$
such that $\nu$ satisfies a Poincar\'e-type inequality on the real line, and with 
constants $c$ depending on $\lambda_1$, only. A characterization of the property 
$\lambda_1 > 0$ may also be given in terms of the distribution function of $\nu$ 
and the density of its absolutely continuous component (cf. \cite{B-G}).

\vskip3mm
\noindent
{\bf 4a)} Let us take the metric probability space $(\{0,1\}^n,d_n,\mu)$, where $d_n$ 
is the Hamming distance, that is, $d_n(x,y)=\sharp \{i: x_i \ne y_i\}$, equipped with 
the uniform measure $\mu$. For this particular space, Marton established the 
transport-entropy inequality \eqref{4.2} with an optimal constant $\sigma^2 = \frac{n}{4}$,
cf. \cite{Mar}. Using the relation \eqref{4.2} as an equivalent 
definition of the subgaussian constant, we obtain from Theorem \ref{t.1.1}:

\begin{cor}\label{c.6.4}
For any non-empty set $A \subset \{0,1\}^n$,
the subgaussian constant $\sigma^2(\mu_A)$ of the normalized restricted measure
$\mu_A$ satisfies, up to an absolute constant $c$,
\be \label{6.1}
\sigma^2(\mu_A) \, \leq \, cn\,\log\Big(\frac{e}{\mu(A)}\Big).
\en
\end{cor}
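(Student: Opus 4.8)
The plan is to read off Corollary \ref{c.6.4} from Theorem \ref{t.1.1} once we know that the uniform measure $\mu$ on the Hamming cube $(\{0,1\}^n, d_n)$ has subgaussian constant $\sigma^2(\mu) \le n/4$. This bound can be obtained in two equivalent ways. Since $(\{0,1\}^n, d_n)$ is a finite, hence Polish, metric space, the result of \cite{B-G} identifies $\sigma^2(\mu)$ with the optimal constant in the transport-entropy inequality \eqref{4.2}, and Marton's inequality \cite{Mar} furnishes exactly the value $n/4$ there; so $\sigma^2(\mu) \le n/4$. Alternatively, and without invoking \eqref{4.2}, note that a function $f$ on $\{0,1\}^n$ with $\|f\|_{\rm Lip} \le 1$ with respect to $d_n$ changes by at most $1$ when a single coordinate is flipped, i.e. it has bounded differences with all constants equal to $1$; the bounded-differences (Azuma--Hoeffding) estimate then gives $\int e^{t(f-m)}\,d\mu \le e^{t^2 n/8}$ for every $t \in \R$, which is precisely \eqref{1.1} with $\sigma^2 = n/4$. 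Either route yields $\sigma^2(\mu) \le n/4$.

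With this in hand, the rest is a direct specialization. Theorem \ref{t.1.1} is valid for an arbitrary metric probability space --- its proof only uses the real-valued Lipschitz extension $\tilde f(x) = \inf_{a \in A}[f(a) + d(a,x)]$, which is available on any metric space --- so it applies to $(\{0,1\}^n, d_n, \mu)$ and to any non-empty $A \subset \{0,1\}^n$, for which $\mu(A) = |A|/2^n > 0$ automatically. Hence, with $c_0$ the absolute constant of Theorem \ref{t.1.1},
\[
\sigma^2(\mu_A) \, \le \, c_0\,\log\Big(\frac{e}{\mu(A)}\Big)\,\sigma^2(\mu)
\, \le \, \frac{c_0}{4}\, n\,\log\Big(\frac{e}{\mu(A)}\Big),
\]
and renaming $c = c_0/4$ gives \eqref{6.1}.

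There is no real obstacle beyond this bookkeeping; the only point that merits a sentence of justification is the passage, in either route, to the explicit value $n/4$ for $\sigma^2(\mu)$ (the equivalence with the transport-entropy formulation of \cite{B-G}, or the applicability of the bounded-differences bound to $1$-Lipschitz functions on $(\{0,1\}^n, d_n)$). Once $\sigma^2(\mu) \le n/4$ is established, Corollary \ref{c.6.4} is immediate. I would also note, as the text does for the analogous corollaries, that the logarithmic factor in \eqref{6.1} cannot be dropped --- this is discussed in Section \ref{opt} --- so the dependence on $\mu(A)$ is optimal.
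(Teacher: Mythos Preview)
Your proof is correct and follows essentially the same approach as the paper: invoke Marton's transport-entropy inequality to obtain $\sigma^2(\mu) \le n/4$ for the uniform measure on the Hamming cube, then apply Theorem \ref{t.1.1}. Your additional alternative via the bounded-differences (Azuma--Hoeffding) inequality is also valid and a nice self-contained complement, but it is not needed for the paper's argument.
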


\vskip3mm
\noindent
{\bf 4b)}
Let us now assume that $A$ is {\em monotone}, i.e., $A$ satisfies the condition 
\[
(x_1,\ldots,x_n) \in A \quad \implies \quad   (y_1,\ldots,y_n) \in A, \ \textrm{whenever} \ \ y_i \geq x_i, \ i=1,\ldots,n. 
\]
Recall that the discrete cube can be equipped with a natural graph structure: there is an edge between $x$ and $y$ whenever they are of Hamming distance $d_n(x,y)=1$.  For monotone sets $A$,  the graph metric $d_A$ on the subgraph on $A$ is equal to the restriction of $d_n$ to $A \times A$. Indeed, we have: 
\[
	d_n(x,y) \leq d_A(x,y) \leq d_A(x,x \wedge y) + d_A(y,x \wedge y) = d_n(x,x \wedge y) + d_n(y,x \wedge y) =d_n(x,y),
\]
where $x \wedge y = (x_1 \wedge y_1,\ldots,x_n \wedge y_n)$. Thus, 
\[
	s^2(\mu_A,d_A) \leq \sigma^2(\mu_A,d_A) \leq cn \log\left( \frac{e}{\mu(A)} \right).  
\]
This can be compared with what follows from a recent result of Ding and Mossel (see \cite{D-M}). The authors proved that the conductance (Cheeger constant) of $(A,\mu_A)$ satisfies $\phi(A) \geq \frac{\mu(A)}{16n}$. However, this type of isoperimetric results may not imply sharp concentration bounds. Indeed, by using Cheeger inequality, the above inequality leads to $\lambda_1 \geq c \mu(A)^2 /n^2$ and $s^2(\mu_A,d_A) \leq {1}/{\lambda_1} \leq c n^2/\mu(A)^2$, which is even worse than the trivial estimate $s^2(\mu_A,d_A) \leq \frac12 \textrm{diam}(A)^2 \leq n^2/2$.

\vskip3mm
\noindent
{\bf 5)} Let $(M,d,\mu)$ be a (separable) metric probability space with finite 
subgaussian constant $\sigma^2(\mu)$. The previous example can be naturally 
generalized to the product space $(M^n,\mu^n)$, when it is equipped with the 
$\ell^1$-type metric
\[
d_n(x,y) = \sum_{i=1}^n d(x_i,y_i), \qquad 
x = (x_1, \dots, x_n), \  y = (y_1, \dots, y_n) \in M^n.
\]
This can be done with the help of the following elementary observation.

\begin{prop}\label{p.6.5}
The subgaussian constant of the space $(M^n,d_n,\mu^n)$ 
is related to the subgaussian constant of $(M,d,\mu)$ by the equality
$
\sigma^2(\mu^n) = n\sigma^2(\mu).
$
\end{prop}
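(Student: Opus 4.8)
The plan is to prove the two inequalities $\sigma^2(\mu^n)\ge n\sigma^2(\mu)$ and $\sigma^2(\mu^n)\le n\sigma^2(\mu)$ separately: the first via a direct tensor-power construction, and the second by a tensorization argument, i.e.\ induction on $n$.

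For the lower bound, recall that, by the definition \eqref{1.1} together with the quantity $\sigma_f^2=\sup_{t\ne 0}\frac{2}{t^2}\log\int e^{tf}\,d\mu$ introduced in Section~4, one has $\sigma^2(\mu)=\sup_f \sigma_f^2$, the supremum being over all $f:M\to\R$ with $\mu$-mean zero and $\|f\|_{\rm Lip}\le 1$. Given such an $f$, set $F(x_1,\dots,x_n)=f(x_1)+\dots+f(x_n)$ on $M^n$. Then $F$ has $\mu^n$-mean zero, and the triangle inequality gives $|F(x)-F(y)|\le\sum_i d(x_i,y_i)=d_n(x,y)$, so $\|F\|_{\rm Lip}\le 1$ for $d_n$. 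Since the coordinates are independent, $\int e^{tF}\,d\mu^n=\big(\int e^{tf}\,d\mu\big)^n$, whence $\log\int e^{tF}\,d\mu^n=n\log\int e^{tf}\,d\mu$ and therefore $\sigma_F^2=n\sigma_f^2$. As $F$ is an admissible test function on $(M^n,d_n,\mu^n)$, taking the supremum over $f$ yields $\sigma^2(\mu^n)\ge n\sup_f\sigma_f^2=n\sigma^2(\mu)$.

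For the upper bound I would argue by induction on $n$, the case $n=1$ being trivial. Assume the bound for $n-1$ and let $F$ be any function on $M^n$ with $\|F\|_{\rm Lip}\le 1$ for $d_n$ and $\mu^n$-mean zero; write $x=(x',x_n)$ with $x'\in M^{n-1}$. For each fixed $x_n$ the slice $x'\mapsto F(x',x_n)$ is $1$-Lipschitz on $(M^{n-1},d_{n-1})$, so subtracting its $\mu^{n-1}$-mean $g(x_n):=\int_{M^{n-1}}F(x',x_n)\,d\mu^{n-1}(x')$ and invoking the induction hypothesis gives $\int_{M^{n-1}}e^{t(F(x',x_n)-g(x_n))}\,d\mu^{n-1}(x')\le e^{(n-1)\sigma^2(\mu)t^2/2}$. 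Integrating this in $x_n$ over $\mu$ yields $\int_{M^n}e^{tF}\,d\mu^n\le e^{(n-1)\sigma^2(\mu)t^2/2}\int_M e^{tg}\,d\mu$. Now $g$ is itself $1$-Lipschitz on $M$ (again by the triangle inequality applied under the integral sign) and has $\mu$-mean $\int_{M^n}F\,d\mu^n=0$, so the definition of $\sigma^2(\mu)$ gives $\int_M e^{tg}\,d\mu\le e^{\sigma^2(\mu)t^2/2}$; multiplying the two estimates closes the induction, and the equality follows by combining with the lower bound.

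The routine points to verify are measurability and finiteness of $g$, which are harmless in the only interesting case $\sigma^2(\mu)<\infty$ since then the relevant slices are exponentially integrable; when $\sigma^2(\mu)=\infty$ the asserted equality holds trivially because the lower-bound construction already forces $\sigma^2(\mu^n)=\infty$. I expect the main (mild) obstacle to be the bookkeeping in the induction step — namely checking that passing to a coordinate slice and to a partial average both preserve being $1$-Lipschitz for the appropriate $\ell^1$-type metric, and that the mean-zero normalization is propagated correctly at each stage.
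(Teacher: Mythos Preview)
Your proposal is correct and follows essentially the same approach as the paper: both establish the upper bound by an induction/tensorization argument exploiting that $1$-Lipschitz for $d_n$ is the same as coordinatewise $1$-Lipschitz, and both obtain the lower bound by testing on additive functions $F(x)=f(x_1)+\dots+f(x_n)$. The only cosmetic difference is the order in the induction step --- the paper integrates out the last coordinate first and then applies the induction hypothesis on $M^{n-1}$, whereas you integrate over the first $n-1$ coordinates first and then handle the remaining single coordinate --- but this is the same tensorization idea.
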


Indeed, one may argue by induction on $n$. Let $f$ be a function on $M^n$. The Lipschitz
property $\|f\|_{\rm Lip} \leq 1$ with respect to $d_n$ is equivalent to the assertion 
that $f$ is coordinatewise Lipschitz, that is, any function of the form 
$x_i \rightarrow f(x)$ has a Lipschitz semi-norm $\leq 1$ on $M$ for all fixed 
coordinates $x_j \in M$ ($j \neq i$). Hence, in this case, for all $t \in \R$,
\[
\int_M e^{tf(x)}\, d\mu(x_n) \leq 
\exp\Big\{t\int_M f(x)\, d\mu(x_n) + \frac{\sigma^2 t^2}{2}\Big\},
\]
where $\sigma^2 = \sigma^2(\mu)$. Here the function 
$(x_1,\dots,x_{n-1}) \rightarrow \int_M f(x)\, d\mu(x_n)$ is also coordinatewise
Lipschitz. Integrating the above inequality with respect to $d\mu^{n-1}(x_1,\dots,x_{n-1})$ 
and applying the induction hypothesis, we thus get 
\[
\int_{M^n} e^{tf(x)}\, d\mu^n(x) \leq \exp\Big\{t\int_{M^n} f(x)\, d\mu^n(x) + 
n\, \frac{\sigma^2 t^2}{2}\Big\}.
\]
But this means that $\sigma^2(\mu^n) \leq n \sigma^2(\mu)$.

For an opposite bound, it is sufficient to test \eqref{1.1} for $(M^n,d_n,\mu^n)$ 
in the class of all coordinatewise Lipschitz functions of the form
$f(x) = u(x_1) + \dots + u(x_n)$ with $\mu$-mean zero functions $u$ on $M$ such that
$\|u\|_{\rm Lip} \leq 1$.

\begin{cor}\label{c.6.6}
For any Borel set $A \subset M^n$ such that $\mu^n(A) > 0$, 
the subgaussian constant of the normalized restricted measure $\mu^n_A$ with respect 
to the $\ell^1$-type metric $d_n$ satisfies 
\[
\sigma^2(\mu^n_A) \leq cn \sigma^2(\mu)\, \log\Big(\frac{e}{\mu^n(A)}\Big),
\]
where $c$ is an absolute constant.
\end{cor}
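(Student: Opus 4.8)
The plan is to combine Proposition \ref{p.6.5} with the abstract restriction estimate of Theorem \ref{t.1.1}, applied to the product space rather than to $M$ itself. First I would observe that $(M^n, d_n, \mu^n)$ is a metric probability space whose subgaussian constant is, by Proposition \ref{p.6.5}, exactly $\sigma^2(\mu^n) = n\,\sigma^2(\mu)$. This reduces the problem to a single application of the general principle: the restriction of a measure to a set of positive measure inflates the subgaussian constant by at most a logarithmic factor in the reciprocal of that measure.

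Concretely, I would take the set $A \subset M^n$ with $\mu^n(A) > 0$ and invoke Theorem \ref{t.1.1} with the space $(M^n, d_n, \mu^n)$ in the role of $(M,d,\mu)$ and $A$ in the role of the distinguished subset. This yields
\[
\sigma^2(\mu^n_A) \,\leq\, c\,\log\!\Big(\frac{e}{\mu^n(A)}\Big)\,\sigma^2(\mu^n),
\]
with $c$ the absolute constant from Theorem \ref{t.1.1}. Substituting $\sigma^2(\mu^n) = n\,\sigma^2(\mu)$ from Proposition \ref{p.6.5} gives
\[
\sigma^2(\mu^n_A) \,\leq\, c\,n\,\sigma^2(\mu)\,\log\!\Big(\frac{e}{\mu^n(A)}\Big),
\]
which is exactly the claimed bound with the same absolute constant $c$.

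There is essentially no obstacle here: the corollary is a direct composition of two already-established facts, and the only points worth a sentence of care are that $\mu^n_A$ is the normalized restriction of $\mu^n$ to $A$ taken with respect to the $\ell^1$-type metric $d_n$ (so that the notion of subgaussian constant on the left matches the one on the right), and that separability of $(M,d)$ — assumed in the ambient setup — passes to the product $(M^n, d_n)$ so that Theorem \ref{t.1.1} and Proposition \ref{p.6.5} both apply without extra hypotheses. The constant is inherited verbatim from Theorem \ref{t.1.1}; no further optimization is needed.
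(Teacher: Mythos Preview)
Your proposal is correct and matches the paper's approach exactly: the corollary is stated immediately after Proposition \ref{p.6.5} precisely because it follows by applying Theorem \ref{t.1.1} to the product space $(M^n,d_n,\mu^n)$ and then substituting $\sigma^2(\mu^n)=n\,\sigma^2(\mu)$. The paper does not spell out a separate proof, as the derivation is the two-line composition you give.
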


For example, if $\mu$ is a probability measure on $M = \R$ such that
$\int_{-\infty}^\infty e^{x^2/\lambda^2}\,d\mu(x) \leq 2$ ($\lambda > 0$), then 
for the restricted product measures we have
\be \label{6.2}
\sigma^2(\mu^n_A) \, \leq \, cn \lambda^2\, \log\Big(\frac{e}{\mu^n(A)}\Big)
\en
with respect to the $\ell^1$-norm $\|x\|_1 = |x_1| + \dots + |x_n|$ on $\R^n$.

Indeed, by the integral hypothesis on $\mu$, for any $f$ on $\R$ with
$\|f\|_{\rm Lip} \leq 1$,
\bee
\int_{-\infty}^\infty \int_{-\infty}^\infty e^{(f(x)-f(y))^2/2\lambda^2}\,d\mu(x)d\mu(y)
 & \leq &
\int_{-\infty}^\infty \int_{-\infty}^\infty e^{(x-y)^2/2\lambda^2}\,d\mu(x)d\mu(y) \\ 
 & \leq &
\int_{-\infty}^\infty \int_{-\infty}^\infty e^{(x^2 + y^2)/\lambda^2}\,d\mu(x)d\mu(y) 
 \, \leq \, 4.
\ene
Hence, if $f$ has $\mu$-mean zero, by Jensen's inequality,
\[
\int_{-\infty}^\infty \int_{-\infty}^\infty e^{f(x)^2/4\lambda^2}\,d\mu(x) \, \leq \,
\int_{-\infty}^\infty \int_{-\infty}^\infty e^{(f(x)-f(y))^2/4\lambda^2}\,d\mu(x)d\mu(y)
 \, \leq \, 2,
\]
meaning that $\|f\|_{L^{\psi_2}(\mu)} \leq 2\lambda$. By Lemma \ref{l.4.1}, cf. \eqref{4.1}, it follows
that $\sigma^2(\mu) \leq 16 \lambda^2$, so, \eqref{6.2} holds true by an application of
Corollary \ref{c.6.6}.

\section{Deviations for non-Lipschitz functions}\label{deviations}
\setcounter{equation}{0}

Let us now turn to the interesting question on the relationship between
the distribution of a locally Lipschitz function and the distribution of
its modulus of the gradient. We still keep the setting of a metric probability 
space $(M,d,\mu)$ and assume it has a finite subgaussian constant 
$\sigma^2 = \sigma^2(\mu)$ $(\sigma \geq 0)$. 

Let us say that a continuous function $f$ on $M$ is locally Lipschitz, if 
$|\nabla f(x)|$ is finite for all $x \in M$. Recall that we consider the sets
\be \label{7.1}
A = \{x \in M: |\nabla f(x)| \leq L\}, \qquad L > 0.
\en
First we state a more general version of Corollary \ref{c.1.2}.

\begin{thm}\label{t.7.1}
Assume that a locally Lipschitz function $f$ on $M$
has Lipschitz semi-norms $\leq L$ on the sets of the form \eqref{7.1}. If
$\mu\{|\nabla f| \geq L_0\} \leq \frac{1}{2}$, then for all $t>0$,
\be  \label{7.2}
(\mu \otimes \mu) \big\{|f(x) - f(y)| \geq t\big\} \, \leq \, 2\,\inf_{L \geq L_0}
\Big[\,e^{-t^2/c\sigma^2 L^2} + \mu\big\{|\nabla f| > L\big\}\Big],
\en
where $c$ is an absolute constant.
\end{thm}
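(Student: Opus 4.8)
The plan is to combine the ``truncation at level $L$'' idea already sketched in the introduction with Theorem~\ref{t.1.1} (in the form of the restricted $\psi_2$-bound of Corollary~\ref{c.3.2}), applied simultaneously for every admissible $L$. Fix $L \geq L_0$ and set $A = A_L = \{x : |\nabla f(x)| \leq L\}$. By hypothesis $\mu(A) \geq \mu\{|\nabla f| \leq L_0\} \geq \frac12$, so $\log(e/\mu(A)) \leq \log(2e)$ is an absolute constant; this is precisely what makes the $\mu(A)$-dependent factor in \eqref{1.4} harmless here. Since $f$ has Lipschitz semi-norm $\leq L$ on $A$, Lemma~\ref{l.4.2} applied on $(A,d,\mu_A)$ together with Corollary~\ref{c.3.2} (or directly Theorem~\ref{t.1.1} via $\sigma^2(\mu_A) \leq c\log(e/\mu(A))\,\sigma^2(\mu) \leq c'\sigma^2$) gives
\[
\big\|f(x)-f(y)\big\|_{L^{\psi_2}(\mu_A\otimes\mu_A)}^2 \leq c'\sigma^2 L^2
\]
for an absolute constant $c'$. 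Chebyshev's inequality for the $\psi_2$-norm then yields
\[
(\mu_A\otimes\mu_A)\big\{|f(x)-f(y)| \geq t\big\} \leq 2\,e^{-t^2/(c'\sigma^2 L^2)}, \qquad t>0.
\]

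Next I would transfer this from $\mu_A\otimes\mu_A$ back to $\mu\otimes\mu$. Write $B = \{(x,y): |f(x)-f(y)|\geq t\}$. Splitting according to whether the point lies in $A\times A$ or not,
\[
(\mu\otimes\mu)(B) \leq (\mu\otimes\mu)\big(B \cap (A\times A)\big) + (\mu\otimes\mu)\big((A\times A)^c\big).
\]
The first term is $\mu(A)^2\,(\mu_A\otimes\mu_A)(B) \leq (\mu_A\otimes\mu_A)(B) \leq 2e^{-t^2/(c'\sigma^2L^2)}$. The second term is at most $2\,\mu(A^c) = 2\,\mu\{|\nabla f| > L\}$ by a union bound over the two coordinates. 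Hence
\[
(\mu\otimes\mu)\big\{|f(x)-f(y)|\geq t\big\} \leq 2\,e^{-t^2/(c'\sigma^2 L^2)} + 2\,\mu\{|\nabla f| > L\}.
\]
Absorbing the factor $2$ into the bracket (at the cost of enlarging the absolute constant, or simply by noting $2e^{-s}+2p \leq 2(e^{-s}+p) + 2(e^{-s}+p)$... more cleanly: both summands already appear with the same constant $2$, so the right-hand side is $\leq 2\,[\,e^{-t^2/(c'\sigma^2L^2)} + \mu\{|\nabla f|>L\}\,] + 2\mu\{|\nabla f|>L\}$, and one adjusts constants) gives a bound of the asserted shape for this particular $L$. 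Since $L \geq L_0$ was arbitrary, taking the infimum over all such $L$ yields \eqref{7.2}.

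\textbf{Main obstacle.} The genuinely delicate point is the applicability of Theorem~\ref{t.1.1} to the restricted space $(A,d,\mu_A)$: one must know that $f$ restricted to $A$ really is Lipschitz with constant $L$ with respect to the ambient metric $d$ (not merely that its gradient is pointwise $\leq L$ on $A$) — this is exactly the hypothesis ``Lipschitz semi-norms $\leq L$ on the sets \eqref{7.1}'' built into the statement, and in the continuous setting it requires $A$ to be, say, such that the geodesic distance within $A$ controls $d$, or that $f$ extends; Kirszbraun's theorem (as used in the proof of Theorem~\ref{t.1.1}) then provides the Lipschitz extension $\tilde f$ to all of $M$, and one applies the $\psi_2$ machinery to $\tilde f$, using $\tilde f = f$ on $A$. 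A secondary, purely cosmetic nuisance is bookkeeping the absolute constants through Corollary~\ref{c.3.2}, Lemma~\ref{l.4.2} and the two-coordinate union bound so that the final $c$ is genuinely independent of $L$, $L_0$, $f$ and the space; since $\log(e/\mu(A)) \leq \log(2e)$ throughout, no such dependence creeps in.
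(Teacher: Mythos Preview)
Your proposal is correct and follows essentially the same route as the paper: apply Theorem~\ref{t.1.1} on the level set $A=\{|\nabla f|\leq L\}$ (where $\mu(A)\geq \tfrac12$ makes the logarithmic factor an absolute constant), get a subgaussian tail for $f(x)-f(y)$ under $\mu_A\otimes\mu_A$, then split $M\times M$ into $A\times A$ and its complement and optimize over $L\geq L_0$. Your hesitation about ``absorbing the factor $2$'' is unnecessary, since $2e^{-t^2/(c'\sigma^2L^2)} + 2\mu\{|\nabla f|>L\} = 2\big[e^{-t^2/(c'\sigma^2L^2)} + \mu\{|\nabla f|>L\}\big]$ is already exactly the shape of~\eqref{7.2}.
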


\begin{proof}
Although the argument is already mentioned in Section \ref{intro}, let us replace \eqref{1.6} with 
a slightly different bound. Applying Theorem \ref{t.1.1}, the definition \eqref{1.1} yields
\[
\int\!\!\!\int e^{t(f(x)-f(y))}\,d\mu_A(x)d\mu_A(y) \leq e^{c\sigma^2 L^2 t^2/2}, 
\quad {\rm for \ all} \ \ t \in \R,
\]
where $A$ is defined in \eqref{7.1} with $L \geq L_0$, and where $c$ is universal constant.
From this, for any $t>0$,
\[
(\mu_A \otimes \mu_A)\,\{(x,y) \in A \times A: |f(x) - f(y)| \geq t\} \leq 
2 e^{-t^2/(2c\sigma^2 L^2)},
\]
and therefore
\[
(\mu \otimes \mu)\, \{(x,y) \in A \times A: |f(x) - f(y)| \geq t\} \leq 
2 e^{-t^2/(2c\sigma^2 L^2)}.
\]
The product measure of the complement of $A \times A$ does not exceed 
$2 \mu\{|\nabla f(x)| > L\}$, and we obtain \eqref{7.2}.
\end{proof}

If $\int e^{|\nabla f|^2}\, d\mu \leq 2$, we have, by Chebyshev's inequality,
$\mu\{|\nabla f| \geq L\} \leq 2e^{-L^2}$, so one may take $L_0 = \sqrt{\log 4}$. 
Theorem \ref{t.7.1} then gives that, for any $L^2 \geq \log 4$,
\[
(\mu \otimes \mu) \big\{|f(x) - f(y)| \geq t\big\} \, \leq \, 2\,
e^{-t^2/c\sigma^2 L^2} + 4e^{-L^2}.
\]
For $t \geq 2\sigma$ one may choose here $L^2 = \frac{t}{\sigma}$, leading to
\[
(\mu \otimes \mu) \big\{|f(x) - f(y)| \geq t\big\} \, \leq \, 6\, e^{-t/c\sigma}\,,
\]
for some absolute constant $c > 1$. In case $0 \leq t \leq 2\sigma$, this inequality
is fulfilled automatically, so it holds for all $t \geq 0$. As a result,
with some absolute constant $C$,
\[
\|f(x) - f(y)\|_{\psi_1} \leq C\sigma,
\]
which is an equivalent way to state the inequality of Corollary \ref{c.1.2}.

As we have already mentioned, with the same arguments inequalities like \eqref{7.2} 
can be derived on the basis of subgaussian constants defined for different 
classes of functions. For example, one may consider the subgaussian constant
$\sigma_{\cal F}^2(\mu)$ for the class $\cal F$ of all convex Lipschitz functions 
$f$ on the Euclidean space $M = \R^n$ (which we equip with the Euclidean 
distance). Note that $|\nabla f(x)|$ is everywhere finite in the $n$-space, when
$f$ is convex. Keeping in mind  Remark \ref{r.4.3}, what we need is the following analog of 
Kirszbraun's theorem:

\begin{lem}\label{l.7.2}
Let $f$ be a convex function on $\R^n$. For any $L>0$, there 
exists a convex function $g$ on $\R^n$ such that $f = g$ on the set 
$A = \{x: |\nabla f(x)| \leq L\}$ and $|\nabla g| \leq L$ on $\R^n$.
\end{lem}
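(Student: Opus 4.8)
The plan is to produce the extension $g$ by an explicit infimal-convolution formula, mimicking the role that the formula $\tilde f(x)=\inf_{a\in A}[f(a)+d(a,x)]$ plays in Kirszbraun's theorem, but adapted so that convexity is preserved. The natural candidate is the largest convex function lying below the ``cone majorant'' built from $f|_A$; concretely I would set
\[
g(x) \;=\; \inf\Big\{\, \textstyle\sum_i \lambda_i\big(f(a_i)+\langle \nabla f(a_i), x-a_i\rangle\big)\ :\ a_i\in A,\ \lambda_i\ge 0,\ \textstyle\sum_i\lambda_i=1,\ x=\textstyle\sum_i\lambda_i a_i\Big\},
\]
or, more cleanly, $g = \sup\{\,\ell:\ \ell\ \text{affine},\ \ell\le f,\ \|\nabla\ell\|\le L\,\}$, i.e.\ the supremum of all affine functions with slope of norm at most $L$ that stay below $f$. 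The second description makes it transparent that $g$ is convex (a sup of affine functions), that $g\le f$ everywhere, and that every supporting affine function of $g$ has slope $\le L$, so $|\nabla g|\le L$ on all of $\R^n$. The work is then to show $g=f$ on $A$.

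First I would observe that on $A$ one has $g\le f$ for free, so only $g\ge f$ on $A$ needs proof. Fix $x_0\in A$. Since $f$ is convex, it has a supporting hyperplane at $x_0$: there is $v\in\partial f(x_0)$ with $f(y)\ge f(x_0)+\langle v,y-x_0\rangle$ for all $y$. The point is that $x_0\in A$ forces $|v|\le L$ — here I would use the (standard, easily checked) fact that for a convex function the generalized modulus of the gradient $|\nabla f(x_0)|$ equals $\sup_{v\in\partial f(x_0)}|v|$, hence $x_0\in A$ gives $|v|\le L$. Therefore the affine function $\ell(y)=f(x_0)+\langle v,y-x_0\rangle$ is an admissible competitor in the supremum defining $g$, whence $g(x_0)\ge \ell(x_0)=f(x_0)$. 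Combined with $g\le f$ this yields $g=f$ on $A$.

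It remains to check the regularity claims: that $g$ is finite everywhere (not $\equiv-\infty$) and that the pointwise bound $|\nabla g|\le L$ really holds in the generalized ($\limsup$) sense used throughout the paper. Finiteness follows since $A$ is nonempty — any single supporting affine function at a point of $A$ is a lower bound, and $g\le f<\infty$ gives the upper bound; a convex function finite on all of $\R^n$ is automatically continuous and locally Lipschitz. For the gradient bound: $g$ convex with $g=\sup$ of affine functions of slope $\le L$ implies every element of $\partial g(x)$ has norm $\le L$, and for convex functions $|\nabla g(x)|=\sup_{w\in\partial g(x)}|w|\le L$; this is exactly the ``$L$-Lipschitz'' conclusion. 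The main obstacle, and the step deserving the most care, is the identification $|\nabla f(x_0)|=\sup_{v\in\partial f(x_0)}|v|$ for convex $f$ at points of $A$ — in particular making sure the generalized gradient modulus used in \eqref{1.5}/\eqref{7.1} coincides with the subdifferential norm, so that membership in $A$ genuinely controls the slope of the supporting hyperplane; everything else is soft convex-analysis bookkeeping.
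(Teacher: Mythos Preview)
Your proposal is correct and, via your ``cleaner'' description $g=\sup\{\ell:\ell\text{ affine},\ \ell\le f,\ |\nabla\ell|\le L\}$, is essentially identical to the paper's proof, which defines $g$ as the supremum over those tangent affine minorants of $f$ whose slope has norm at most $L$. The key step---that a supporting hyperplane at $x_0\in A$ has slope of norm $\le|\nabla f(x_0)|\le L$---is carried out in the paper by exactly the computation you describe.
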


Accepting for a moment this lemma without proof, we get:

\begin{thm}\label{t.7.3}
Assume that a convex function $f$ on $\R^n$ satisfies 
$\mu\{|\nabla f| \geq L_0\} \leq \frac{1}{2}$. Then for all $t>0$,
\[
(\mu \otimes \mu) \big\{|f(x) - f(y)| \geq t\big\} \, \leq \, 2\,\inf_{L \geq L_0}
\Big[\,e^{-t^2/c\sigma^2 L^2} + \mu\big\{|\nabla f| > L\big\}\Big],
\]
where $\sigma^2 = \sigma_{\cal F}^2(\mu)$ and $c$ is an absolute constant.
\end{thm}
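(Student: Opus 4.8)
The plan is to mirror the proof of Theorem~\ref{t.7.1} almost verbatim, replacing the use of Kirszbraun's theorem by its convex analog, Lemma~\ref{l.7.2}, and replacing the generic subgaussian constant $\sigma^2(\mu)$ by the class-restricted constant $\sigma_{\cal F}^2(\mu)$ for the class $\cal F$ of convex Lipschitz functions on $\R^n$. The key point that makes this legitimate is the generalized version of Theorem~\ref{t.1.1} recorded in Remark~\ref{r.4.3}: for any collection $\cal F$ of integrable functions and any measurable $A$ with $\mu(A)>0$, one has $\sigma_{{\cal F}_A}^2(\mu_A)\le c\,\log(e/\mu(A))\,\sigma_{\cal F}^2(\mu)$, where ${\cal F}_A$ is the collection of restrictions to $A$.

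First I would fix $L\ge L_0$ and set $A=\{x:|\nabla f(x)|\le L\}$; by the hypothesis $\mu\{|\nabla f|\ge L_0\}\le\frac12$ and $L\ge L_0$ we have $\mu(A)\ge\frac12$, in particular $\mu(A)>0$. Since $f$ is convex with $|\nabla f|\le L$ on $A$, Lemma~\ref{l.7.2} furnishes a \emph{convex} function $g$ on $\R^n$ with $g=f$ on $A$ and $|\nabla g|\le L$ everywhere, hence $g/L\in{\cal F}$ (it is convex with Lipschitz semi-norm $\le 1$, as $|\nabla g|\le L$ and $g$ is convex, so $\|g\|_{\rm Lip}\le L$). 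Applying Remark~\ref{r.4.3} with this class $\cal F$, the function $g/L$ restricted to $A$ satisfies the subgaussian bound with constant $c\,\log(e/\mu(A))\,\sigma_{\cal F}^2(\mu)$; equivalently, writing $\sigma^2=\sigma_{\cal F}^2(\mu)$ and noting that $g=f$ on $A$ so $\mu_A$-integration of $f$ and $g$ coincide, the definition \eqref{1.1} on $(A,\mu_A)$ gives, for all $t\in\R$,
\[
\int\!\!\!\int e^{t(f(x)-f(y))}\,d\mu_A(x)\,d\mu_A(y)\le e^{c\sigma^2 L^2 t^2/2}.
\]

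From here the argument is identical to that of Theorem~\ref{t.7.1}: Chebyshev's inequality in the form $(\mu_A\otimes\mu_A)\{|f(x)-f(y)|\ge t\}\le 2e^{-t^2/(2c\sigma^2 L^2)}$, followed by $(\mu\otimes\mu)|_{A\times A}\le\mu(A)^2(\mu_A\otimes\mu_A)\le(\mu_A\otimes\mu_A)$ on events inside $A\times A$, and then bounding the product measure of the complement of $A\times A$ by $2\mu\{|\nabla f|>L\}$. Taking the infimum over $L\ge L_0$ and absorbing the factor $\mu(A)\ge\frac12$ into the constant $c$ yields the claimed inequality. The main obstacle is really Lemma~\ref{l.7.2} itself --- producing a convex Lipschitz extension agreeing with $f$ on the (convex) region where $|\nabla f|\le L$ --- but that lemma is stated earlier and may be invoked; granting it, the rest is a routine repetition of the scheme already executed for Theorem~\ref{t.7.1}, with the one bookkeeping subtlety being to check that $g/L$ genuinely lies in the class $\cal F$ so that Remark~\ref{r.4.3} applies.
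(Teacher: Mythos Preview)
Your proposal is correct and follows exactly the approach the paper indicates: the paper does not spell out a separate proof of Theorem~\ref{t.7.3} but presents it as an immediate consequence of repeating the argument of Theorem~\ref{t.7.1} once Lemma~\ref{l.7.2} (the convex analog of Kirszbraun's theorem) and Remark~\ref{r.4.3} (the class-restricted version of Theorem~\ref{t.1.1}) are in hand. One minor remark: the parenthetical claim that $A=\{|\nabla f|\le L\}$ is convex is not needed (and not asserted by Lemma~\ref{l.7.2}), so you should simply drop it.
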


For illustration, let $\mu = \mu_1 \otimes \dots \otimes \mu_n$ be an arbitrary product 
probability measure on the cube $[-1,1]^n$. If $f$ is convex and Lipschitz on $\R^n$, 
thus with $|\nabla f| \leq 1$, then
\be \label{7.3}
(\mu \otimes \mu) \big\{|f(x) - f(y)| \geq t\big\} \, \leq \, 2e^{-t^2/c}.
\en
This is one of the forms of Talagrand's concentration phenomenon for the family of 
convex sets/functions (cf. \cite{T1,T2}, \cite{M}, \cite{L}). That is, the subgaussian constants 
$\sigma_{\cal F}^2(\mu)$ are bounded for the class $\cal F$ of convex Lipschitz $f$ 
and product measures $\mu$ on the cube. Hence, using Theorem \ref{t.7.3}, Talagrand's 
deviation inequality \eqref{7.3} admits a natural extension to the class of non-Lipschitz 
convex functions:

\begin{cor}\label{c.7.4}
Let $\mu$ be a product probability measure on the cube, and 
let $f$ be a convex function on $\R^n$. If $\mu\{|\nabla f| \geq L_0\} \leq \frac{1}{2}$, 
then for all $t>0$,
\[
(\mu \otimes \mu) \big\{|f(x) - f(y)| \geq t\big\} \, \leq \, 2\,\inf_{L \geq L_0}
\Big[\,e^{-t^2/c L^2} + \mu\big\{|\nabla f| > L\big\}\Big],
\]
where $c$ is an absolute constant.
\end{cor}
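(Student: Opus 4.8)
The plan is to derive Corollary \ref{c.7.4} as an immediate specialization of Theorem \ref{t.7.3}. The only thing to check is that for a product measure $\mu$ supported on the cube $[-1,1]^n$, the subgaussian constant $\sigma_{\cal F}^2(\mu)$ for the class $\cal F$ of convex Lipschitz functions is bounded by an absolute constant. This is precisely the content of Talagrand's concentration inequality for convex functions, quoted in the text as \eqref{7.3}: for convex $f$ with $\|f\|_{\rm Lip}\le 1$ one has $(\mu\otimes\mu)\{|f(x)-f(y)|\ge t\}\le 2e^{-t^2/c}$, and via Lemma \ref{l.4.2} (applied with the class $\cal F$ of convex Lipschitz functions in place of all Lipschitz functions, exactly as in Remark \ref{r.4.3}) this tail bound translates into $\sigma_{\cal F}^2(\mu)\le C$ for some universal $C$. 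Since the cube has finite diameter, $|\nabla f|$ is finite everywhere when $f$ is convex, so Theorem \ref{t.7.3} applies verbatim.

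First I would invoke the fact that $\sigma_{\cal F}^2(\mu)\le C_0$ for every product probability measure $\mu$ on $[-1,1]^n$, with $C_0$ absolute. Concretely: Talagrand's inequality gives the subgaussian bound on Laplace transforms of $f(x)-f(y)$ over the product space; passing through the $\psi_2$-norm characterization (Lemmas \ref{l.2.1}, \ref{l.4.1}, \ref{l.4.2}), one reads off that $\|f(x)-f(y)\|_{L^{\psi_2}(\mu\otimes\mu)}\le C_1$ uniformly over convex $1$-Lipschitz $f$, hence $\sigma_{\cal F}^2(\mu)\le 4\,(4\sqrt6)^2 C_1^2/(something)$ — in any case an absolute constant. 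Then I would simply feed $\sigma^2=\sigma_{\cal F}^2(\mu)\le C_0$ into the conclusion of Theorem \ref{t.7.3}: for a convex $f$ on $\R^n$ with $\mu\{|\nabla f|\ge L_0\}\le\frac12$,
\[
(\mu\otimes\mu)\big\{|f(x)-f(y)|\ge t\big\}\le 2\inf_{L\ge L_0}\Big[e^{-t^2/(cC_0 L^2)}+\mu\{|\nabla f|>L\}\Big],
\]
and absorbing $C_0$ into $c$ yields exactly the claimed inequality.

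The underlying mechanism of Theorem \ref{t.7.3}, which I would recall but not reprove, is this: the set $A=\{|\nabla f|\le L\}$ has $\mu$-measure $\ge\frac12$ when $L\ge L_0$; by Lemma \ref{l.7.2} the convex function $f$ restricted to $A$ extends to a globally convex function $g$ with $|\nabla g|\le L$ on all of $\R^n$; applying the generalized Theorem \ref{t.1.1} (Remark \ref{r.4.3}) to $g/L\in\cal F$ on the restricted measure $\mu_A$, together with $\mu(A)\ge\frac12$ so that $\log(e/\mu(A))$ is an absolute constant, gives a subgaussian tail for $f(x)-f(y)$ on $A\times A$ with parameter $\asymp\sigma_{\cal F}^2(\mu)L^2$; finally the complement of $A\times A$ under $\mu\otimes\mu$ has measure at most $2\mu\{|\nabla f|>L\}$, and optimizing over $L\ge L_0$ gives the stated infimum.

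The only genuine content beyond bookkeeping is the input $\sigma_{\cal F}^2(\mu)\le C_0$, i.e. Talagrand's theorem for the convex class on product measures in the cube; everything else is a direct citation of Theorem \ref{t.7.3} and Remark \ref{r.4.3}. So there is no real obstacle here — the corollary is a one-line consequence once Talagrand's bound is in hand — and I would present it as such, perhaps spelling out the translation from the exponential tail \eqref{7.3} to the $\psi_2$/Laplace-transform form of $\sigma_{\cal F}^2$ for the reader's convenience. The mildest point requiring a word is that Lemma \ref{l.7.2}'s convex Kirszbraun-type extension is exactly what makes Remark \ref{r.4.3} applicable within the convex class rather than merely the Lipschitz class, but this is already built into the statement of Theorem \ref{t.7.3}, so nothing new is needed.
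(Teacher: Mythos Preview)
Your proposal is correct and follows essentially the same route as the paper: the paper derives Corollary \ref{c.7.4} in one line by invoking Talagrand's concentration inequality \eqref{7.3} to bound $\sigma_{\cal F}^2(\mu)$ by an absolute constant for product measures on the cube, and then applying Theorem \ref{t.7.3} directly. Your recap of the mechanism behind Theorem \ref{t.7.3} (Lemma \ref{l.7.2} plus Remark \ref{r.4.3}) is accurate but, as you note, not needed for the corollary itself.
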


In particular, we have a statement similar to Corollary \ref{c.1.2} -- for this family of 
functions, namely
\[
\|f - m\|_{L^{\psi_1}(\mu)} \leq c\, \| \nabla f \|_{L^{\psi_2}(\mu)},
\]
where $m$ is the $\mu$-mean of $f$.

\begin{proof}[Proof of Lemma \ref{l.7.2}.] An affine function $l_{a,v}(x) = a + \left<x,v\right>$
($v \in \R^n$, $a \in \R$) may be called to be a tangent function to $f$, if $f \geq l$ on 
$\R^n$ and $f(x) = l_{a,v}(x)$ for at least one point $x$. It is well-known that
\[
f(x) = \sup\{l_{a,v}(x): l_{a,v} \in {\cal L}\},
\]
where ${\cal L}$ denotes the collection of all tangent functions $l_{a,v}$. Put,
\[
g(x) = \sup\{l_{a,v}(x): l_{a,v} \in {\cal L}, \ |v| \leq L\}.
\]
By the construction, $g \leq f$ on $\R^n$ and, moreover,
\bee
\|g\|_{\rm Lip} 
 & \leq &
\sup\{\|l_{a,v}\|_{\rm Lip}: l_{a,v} \in {\cal L}, \ |v| \leq L\} \\
 & = &
\sup\{|v|: l_{a,v} \in {\cal L}, \ |v| \leq L\} \, \leq \, L.
\ene
It remains to show that $g = f$ on the set $A = \{|\nabla f| \leq L\}$.
Let $x \in A$ and let $l_{a,v}$ be tangent to $f$ and such that $l_{a,v}(x) = f(x)$.
This implies that $f(y) - f(x) \geq \left<y-x,v\right>$ for all $y \in \R^n$
and hence
\[
|\nabla f(x)| \, = \, \limsup_{y \rightarrow x} \frac{|f(y) - f(x)|}{|y-x|} \, \geq \,
\limsup_{y \rightarrow x} \frac{\left<y-x,v\right>}{|y-x|} \, = \, v.
\]
Thus, $|v| \leq L$, so that $g(x) \geq l_{a,v}(x) = f(x)$.
\end{proof}

\section{Optimality}\label{opt}
\setcounter{equation}{0}

Here we show that the logarithmic dependence in $\mu(A)$ in Theorems \ref{t.1.1} and \ref{t.1.3} 
is optimal, up to the universal constant $c$. We provide several examples.

\begin{ex}
Let us return to Example 4), Section \ref{examples}, of the hypercube 
$M = \{0,1\}^n$, which we equip with the Hamming distance $d_n$ and the uniform measure 
$\mu$. Let us test the inequality \eqref{6.1} of Corollary \ref{c.6.4} on the set $A \subset \{-1,1\}^n$ 
consisting of $n+1$ points
\[
(0,0,0,\dots,0), \ \ (1,0,0,\dots,0),  \ \ (1,1,0,\dots,0), \ \ \dots, \ \ (1,1,1,\dots,1).
\] 
We have $\mu(A)=(n+1)/2^n \geq 1/2^n$. The function $f:A \to \R$, defined by
\[
f(x)=\sharp\{i: x_i=1\}-\frac{n}{2},
\] 
has a Lipschitz semi-norm $\|f\|_{\rm Lip} \leq 1$ with respect to $d$ and the
$\mu_A$-mean zero. Moreover, $\int f^2\,d\mu_A = \frac{n(n+2)}{12}$. Expanding 
the inequality $\int e^{tf}\, d\mu_A \leq e^{\sigma^2(\mu_A)\, t^2/2}$ at the origin
yields $\int f^2\,d\mu_A \leq \sigma^2(\mu_A)$. Hence, recalling that 
$\sigma^2(\mu) \leq \frac{n}{4}$, we get
\bee
\sigma^2(\mu_A) \
 & \geq & 
\int f^2\,d\mu_A  \, \geq \, \frac{n^2}{12} \\
 & \geq & 
\frac{n}{3}\, \sigma^2(\mu) \, \geq \,
\frac{1}{3\log 2}\, \sigma^2(\mu)\, \log\Big(\frac{1}{\mu(A)}\Big).  
\ene
This example shows the optimality of \eqref{6.1} in the regime $\mu(A) \to 0$.
\end{ex}

\begin{ex}
Let $\gamma_n$ be the standard Gaussian measure on $\R^n$ of dimension $n \geq 2$. 
We have $\sigma^2(\gamma_n)=1$. Consider the normalized measure $\gamma_{A_R}$ on the set 
\[
A_R=\left\{(x_1,x_2,\ldots,x_n) \in \R^n: \ x_1^2+x_2^2 \geq R^2\right\}, \qquad R \geq 0.
\]
Using the property that the function $\frac{1}{2}\,(x_1^2 + x_2^2)$ 
has a standard exponential distribution under the measure $\gamma_n$,
we find that $\gamma_n(A_R) = e^{-R^2/2}$. Moreover,
\bee
s^2(\gamma_{A_R}) \, \geq \, \Var_{\gamma_{A_R}}(x_1)
 & = &
\int x_1^2\, d \gamma_{A_R}(x) \, = \, \frac{1}{2} \int (x_1^2+x_2^2)\, d \gamma_{A_R}(x) \\ 
 & = & 
\frac{1}{e^{-R^2/2}} \int_{R^2/2}^\infty r e^{-r}\, dr
 \, = \,
\frac{R^2}{2} + 1 \, = \, \log\Big(\frac{e}{\gamma_n(A_R)}\Big).
\ene
Therefore,
\[
\sigma^2(\gamma_{A_R}) \, \geq \, s^2(\gamma_{A_R}) \, \geq \, 
\log\Big(\frac{e}{\gamma_n(A_R)}\Big),
\]
showing that the inequality \eqref{1.4} of Theorem \ref{t.1.1} is optimal, up to the universal 
constant, for any value of $\gamma_n(A) \in [0,1]$.
\end{ex}

\begin{ex}
A similar conclusion can be made about the uniform probability measure $\mu$ on the
Euclidean ball $B(0,\sqrt{n})$ of radius $\sqrt{n}$, centred 
at the origin (asymptotically for growing dimension $n$). To see this, it is sufficient 
to consider the cylinders
\[
A_\ep = \big\{(x_1,y) \in \R \times \R^{n-1}: |x_1|\leq \sqrt{n-\ep^2} \ \, {\rm and} \, \,  
|y|\leq \ep\big\}, \qquad 0 < \ep \leq \sqrt{n},
\]
and the function $f(x) = x_1$. We leave to the readers corresponding computations.
\end{ex}

\begin{ex}
Let $\mu$ be the two-sided exponential measure on $\R$ with density 
$\frac{1}{2}\,e^{-|x|}$. In this case $\sigma^2(\mu) = \infty$, but, as easy to see, 
$2 \leq s^2(\mu) \leq 4$ (recall that $\lambda_1(\mu) = \frac{1}{4}$). 
We are going to test optimality of the inequality \eqref{1.7} on the sets 
$A_R = \{x \in \R: |x |\geq R\}$ ($R \geq 0$). Clearly, $\mu(A_R) = e^{-R}$, and 
we find that
\bee
s^2(\mu_{A_R}) \geq \Var_{\mu_{A_R}}(x)
 & = &
\int_{-\infty}^\infty x^2\, d\mu_{A_R}(x) \, = \, 
\frac{1}{e^{-R}} \int_R^\infty r^2 e^{-r}\, dr \\
 & = &
R^2 + 2R + 2 \, \geq \, (R + 1)^2 \, = \, \log^2\Big(\frac{e}{\mu(A_R)}\Big).
\ene
Therefore,
\[
s^2(\mu_{A_R}) \geq \log^2\Big(\frac{e}{\mu(A_R)}\Big),
\]
showing that the inequality \eqref{1.7} is optimal, up to the universal constant, for 
any value of $\mu(A) \in (0,1]$.
\end{ex}

\noindent {\bf Acknowledgment.} {The authors gratefully acknowledge the support and hospitality 
of the Institute for Mathematics \& Applications, and the University of Minnesota, 
Minneapolis, where much of this work was conducted. The second named author would 
like to acknowledge the hospitality of the Georgia Institute of Technology, Atlanta,
during the period 02/8-13/2015.}

\section*{Appendix}

\begin{proof}[Proof of Lemma \ref{l.2.1}.] Using the homogeneity, in order to derive the right-hand side inequality
in \eqref{2.1}, we may assume that $\sup_{p \geq 1} \frac{\|f\|_p}{\sqrt{p}} \leq 1$.
Then
$
\int |f|^p\,d\mu \leq p^{p/2}
$
for all $p \geq 1$, and by Chebyshev's inequality,
\[
1 - F(t) \equiv \mu\{|f| \geq t\} \leq \Big(\frac{\sqrt{p}}{t}\Big)^p, \quad
{\rm for \ all} \ \ t>0.
\]
If $t \geq 2$, choose here $p = \frac{1}{4}\,t^2$, in which case
$1 - F(t) \leq 2^{-\frac{1}{4}\,t^2}$.
Integrating by parts, we have, for any $0 < \ep < \frac{\log 2}{4}$,
\bee
\int e^{\ep f^2}\,d\mu 
 & = &
-\int_0^\infty e^{\ep t^2}\,d(1-F(t)) \\
 & = &
1 + 2\ep \int_0^2 t e^{\ep t^2}\,(1-F(t))\,dt + 
2\ep \int_2^\infty t e^{\ep t^2}\,(1-F(t))\,dt \\
 & \leq &
1 + 2\ep \int_0^2 t e^{\ep t^2}\,dt + 
2\ep \int_2^\infty t e^{\ep t^2}\,e^{-\frac{\log 2}{4}\,t^2}\,dt \\
 & = &
e^{4\ep} + \frac{\ep}{\frac{\log 2}{4} - \ep}\ e^{-(\log 2 - 4\ep)}
 \ = \ 
e^{4\ep} \bigg(1 + \frac{\ep}{2(\frac{\log 2}{4} - \ep)}\bigg).
\ene
If $\ep \leq \frac{\log 2}{8}$, the latter expression does not exceed
$\frac{3}{2}\,e^{4\ep}$ which does not exceed 2 for $\ep \leq \frac{\log(4/3)}{4}$.
Both inequalities are fulfilled for $\ep = \frac{\log 2}{10}$, and with this value
$\int e^{\ep f^2}\,d\mu \leq 2$. Hence
\[
\|f\|_{L^{\psi_2}(\mu)} \leq \frac{1}{\sqrt{\ep}} = \sqrt{\frac{10}{\log 2}} < 4,
\]
which yields the right inequality in \eqref{2.1}.
Conversely, if $\|f\|_{L^{\psi_2}(\mu)} = 1$, then $\int e^{f^2}\,d\mu = 2$. Since
$u(t) = t^p\, e^{-t^2}$ is maximized in $t>0$ at $t_0 = \sqrt{\frac{p}{2}}$, we get
\[
\|f\|_p^p = \int u(f) e^{f^2}\,d\mu \leq u(t_0) \cdot 2 = 
2\,\bigg(\frac{\sqrt{p}}{\sqrt{2e}}\bigg)^p.
\]
Hence, $\frac{\|f\|_p}{\sqrt{p}} \leq \frac{2^{1/p}}{\sqrt{2e}} < 1$, 
which yields the left inequality.

Now, let us turn to \eqref{2.2} and assume that 
$\sup_{p \geq 1} \frac{\|f\|_p}{p} = 1$. Then
$
\int |f|^p\,d\mu \leq p^p
$
for all $p \geq 1$, and by Chebyshev's inequality, for all $t > 0$,
\[
1 - F(t) \equiv \mu\{|f| \geq t\} \leq \Big(\frac{p}{t}\Big)^p.
\]
If $t \geq 2$, we may choose here $p = \frac{1}{2}\,t$ in which case
$1 - F(t) \leq 2^{-\frac{1}{2}\,t}$, while for $1 \leq t < 2$ we choose 
$p=1$, so that $1-F(t) \leq \frac{1}{t}$. Arguing as before, we have, for any 
$0 < \ep < \frac{\log 2}{2}$,
\bee
\int e^{\ep |f|}\,d\mu 
 & = &
1 + \ep \int_0^1 e^{\ep t}\,(1-F(t))\,dt +  
\ep \int_1^2 e^{\ep t}\,(1-F(t))\,dt + \ep \int_2^\infty e^{\ep t}\,(1-F(t))\,dt \\
 & \leq &
1 + \ep \int_0^1 e^{\ep t}\,dt + 
\ep \int_1^2 \frac{e^{\ep t}}{t}\,dt +
\ep \int_2^\infty e^{\ep t}\,e^{-\frac{\log 2}{2}\,t}\,dt.
\ene
The pre-last integral can be bounded by $\int_1^2 \frac{e^{2\ep}}{t}\,dt = e^{2\ep} \log 2$,
so
\[
\int e^{\ep |f|}\,d\mu  \, \leq \,
e^{\ep} + \ep e^{2\ep} \log 2 + \frac{\ep}{\frac{\log 2}{2} - \ep}\ e^{-2(\frac{\log 2}{2} - \ep)}.
\]
For $\ep = \frac{1}{6}$, the latter expression is equal to $1.98903902...$,
and thus $\int e^{\ep |f|}\,d\mu < 2$. Hence
\[
\|f\|_{L^{\psi_1}(\mu)} \leq \frac{1}{\ep} = 6.
\]
Conversely, if $\|f\|_{L^{\psi_1}(\mu)} = 1$, then $\int e^{|f|}\,d\mu = 2$. Since
$u(t) = t^p\, e^{-t}$ is maximized at $t_0 = p$, we get
\[
\|f\|_p^p = \int u(f) e^{|f|}\,d\mu \leq u(t_0) \cdot 2 = 
2\,\bigg(\frac{p}{e}\bigg)^p.
\]
Hence,
$\frac{\|f\|_p}{p} \leq \frac{2^{1/p}}{e} < 1$, which yields the left inequality.
\end{proof}

\begin{proof}[Proof of Lemma \ref{l.4.1}.]
First assume that $\|f\|_{\psi_2} = 1$, i.e.,
$\int e^{f^2}\,d\mu = 2$. The function
\[
u(t) = \log\int e^{tf}\,d\mu
\]
is smooth, convex, with $u(0) = 0$ and 
\[
u'(t) = \frac{\int f e^{tf}\,d\mu}{\int e^{tf}\,d\mu}.
\]
In particular, $u'(0) = 0$. Note that, by Jensen's inequality, 
$\int e^{tf}\,d\mu \geq 1$, so $u(t) \geq 0$. Further differentiation gives
\[
u''(t) \, = \, \frac{\int f^2 e^{tf}\,d\mu - 
\big(\int f e^{tf}\,d\mu\big)^2}{\big(\int e^{tf}\,d\mu\big)^2} \, \leq \,
\int f^2 e^{tf}\,d\mu.
\]
Using $tf \leq \frac{t^2 + f^2}{2}$ and the elementary inequality 
$x\, e^{-x/2} \leq 2e^{-1}$, we get, for $|t| \leq 1$,
\bee
\int f^2\, e^{tf}\,d\mu 
 & \leq &
\int f^2\, e^{\frac{t^2 + f^2}{2}}\,d\mu \\
 & = &
e^{t^2/2}\int f^2\,e^{f^2/2}\,d\mu \, \leq \,
e^{t^2/2}\,2e^{-1} \int e^{f^2}\,d\mu \, \leq \, 4.
\ene
Thus, $u''(t) \leq 4$, and by Taylor's formula, $u(t) \leq 2t^2$.

On the hand, for $|t| \geq 1$, by Cauchy's inequality,
\bee
\int e^{tf}\,d\mu 
 & \leq & 
\int e^{\frac{t^2 + f^2}{2}}\,d\mu \ = \ 
e^{t^2/2}\int e^{f^2/2}\,d\mu \\ 
 & \leq &
e^{t^2/2}\,\bigg(\int e^{f^2}\,d\mu\bigg)^{1/2} \, = \, \sqrt{2}\,e^{t^2/2}
 \, \leq \, e^{(1 + \log 2)\,t^2/2}.
\ene
Hence, in this case $u(t) \leq \frac{1 + \log 2}{2}\,t^2 < t^2$. Thus,
\[
\sigma_f^2 = \sup_{t \neq 0} \frac{u(t)}{t^2/2} \leq 4,
\]
proving the right inequality of Lemma \ref{l.4.1}.

For the left inequality, let $\sigma_f^2 = 1$. Then
$\int e^{tf}\,d\mu \leq e^{t^2/2}$ for all $t \in \R$, which implies
\[
1 - F(t) \equiv \mu\{|f| \geq t\} \leq 2 e^{-t^2/2}, \qquad t \geq 0.
\]
Form this, integrating by parts, we have, for any $0 < \ep < \frac{1}{2}$,
\bee
\int e^{\ep f^2}\,d\mu 
 & = &
\int_0^\infty e^{\ep t^2}\,dF(t) \, = \, -\int_0^\infty e^{\ep t^2}\,d(1-F(t)) \\
 & = &
1 + 2\ep \int_0^\infty t e^{\ep t^2}\,(1-F(t))\,dt \\
 & \leq &
1 + 4\ep \int_0^\infty t e^{\ep t^2}\,e^{-t^2/2}\,dt
 \, = \, 
1 + \frac{2\ep}{\frac{1}{2} - \ep}.
\ene
The last expression is equal to 2 for $\ep = \frac{1}{6}$, which means that
$\|f\|_{\psi_2} \leq \sqrt{6}$.
\end{proof}

\end{document}